\title{On Local Super-Penalization of \\ Interior Penalty Discontinuous Galerkin Methods}
\date{\today}
\author[1]{Andrea Cangiani\thanks{andrea.cangiani@le.ac.uk}}
\author[2]{John Chapman\thanks{john.chapman@durham.ac.uk}}
\author[1]{Emmanuil H. Georgoulis\thanks{emmanuil.georgoulis@le.ac.uk}}
\author[2]{Max Jensen\thanks{m.p.j.jensen@durham.ac.uk}}
\affil[1]{Department of Mathematics, University of Leicester, University Road, Leicester, United Kingdom}
\affil[2]{Department of Mathematics, University of Durham, Durham, United Kingdom}
\definecolor{ltgray}{gray}{0.95}
\DeclareMathAlphabet{\mathpzc}{OT1}{pzc}{m}{it}
\DeclareMathAlphabet{\mathcalligra}{T1}{calligra}{m}{n}
\definecolor{refkey}{rgb}{1,0,0}
\definecolor{labelkey}{rgb}{1,0,0}
\newcommand*\patchAmsMathEnvironmentForLineno[1]{%
  \expandafter\let\csname old#1\expandafter\endcsname\csname #1\endcsname
  \expandafter\let\csname oldend#1\expandafter\endcsname\csname end#1\endcsname
  \renewenvironment{#1}%
     {\linenomath\csname old#1\endcsname}%
     {\csname oldend#1\endcsname\endlinenomath}}%
\newcommand*\patchBothAmsMathEnvironmentsForLineno[1]{%
  \patchAmsMathEnvironmentForLineno{#1}%
  \patchAmsMathEnvironmentForLineno{#1*}}%
\tiny\color{red},  
\newtheorem{theorem}[equation]{Theorem}
\newtheorem{definition}[equation]{Definition}
\newtheorem{lemma}[equation]{Lemma}
\numberwithin{equation}{section}
\numberwithin{figure}{section}
\numberwithin{table}{section}
\newcommand\CC{\Lang{\mbox{C++}}\xspace}
\newcommand\Lang[1]{\textsc{#1}}
\newcommand{\bdm}{\begin{displaymath}}
\newcommand{\edm}{\end{displaymath}}
\newcommand{\beq}{\begin{equation}}
\newcommand{\eeq}{\end{equation}}
\newcommand{\newVar}[2]{\newcommand{#1}{\ensuremath{#2}\xspace}}
\newVar\Naturals{\mathbb{N}}
\newVar\Integers{\mathbb{Z}}
\newVar\Rationals{\mathbb{Q}}
\newVar\Reals{\mathbb{R}}
\newVar\Complex{\mathbb{C}}
\newVar\Id{\mathbb{I}}
\newcommand{\code}[1]{\lstinline{#1}}
\newcommand{\norm}[1]{\ensuremath{\lVert#1\rVert}}
\newcommand{\triple}[1]{\ensuremath{|\hspace{-1pt}|\hspace{-1pt}| #1 |\hspace{-1pt}|\hspace{-1pt}|}}
\newcommand{\abs}[1]{\ensuremath{\lvert#1\rvert}}
\newcommand{\set}[1]{\ensuremath{\{#1\}}}
\newcommand{\Average}[1]{\ensuremath{\{ \! \! \{ #1 \}  \! \! \}} }
\newcommand{\average}[1]{\ensuremath{\Average{#1}}}
\newcommand{\Jump}[1]{\ensuremath{\llbracket #1 \rrbracket}}
\newcommand{\jump}[1]{\ensuremath{\Jump{#1}}}
\newcommand{\transpose}[0]{{\ensuremath{\top}}}
\newcommand{\ex}[0]{\ensuremath{\mathrm{e}}}
\newcommand{\Order}[1]{{\ensuremath{\mathcal{O}(#1)}}}
\newVar\pqTree{{\cal P}}
\newVar\groups{\mathbf{G}}
\newVar\lastGroup{\ell}
\newVar{\currentId}{\ensuremath{\id}\xspace}
\renewcommand\epsilon{\varepsilon}
\renewcommand\phi{\varphi}
\renewcommand\theta{\vartheta}
\newcommand{\vectau}[0]{{\ensuremath{\boldsymbol{\tau}}}}
\newcommand{\vecx}[0]{{\ensuremath{\boldsymbol{x}}}}
\newcommand{\matrixa}[0]{{\ensuremath{\mathbb{A}}}}
\newcommand{\Poly}[0]{{\ensuremath{\mathbb{P}}}}
\newcommand{\fhalf}[0]{{\ensuremath{\nicefrac{1}{2}}}}
\newcommand{\nablah}[0]{{\ensuremath{\nabla_h}}}
\newcommand{\cdG}[0]{{\ensuremath{\textrm{cdG}}}}
\newcommand{\dG}[0]{{\ensuremath{\textrm{dG}}}}
\newcommand{\cG}[0]{{\ensuremath{\textrm{cG}}}}
\newcommand{\VcG}[1]{{\ensuremath{V_\cG^{#1}}}}
\newcommand{\VdG}[1]{{\ensuremath{V_\dG^{#1}}}}
\newcommand{\VcdG}[1]{{\ensuremath{V_\cdG^{#1}}}}
\newcommand{\Vorth}[1]{{\ensuremath{V_\perp^{#1}}}}
\newcommand{\cdGh}[0]{{\ensuremath{h}}}
\newcommand{\VtcdG}[1]{{\ensuremath{V_\cdG^{#1}}}}
\newcommand{\VtdG}[1]{\ensuremath{V_\dG^{#1}}}
\newcommand{\Vtorth}[1]{\ensuremath{V_\perp^{#1}}}
\newcommand{\Bform}[0]{{\ensuremath{\mathcal{B}}}}
\newcommand{\Sform}[0]{{\ensuremath{\mathcal{S}}}}
\newcommand{\pen}[0]{{\ensuremath{\mathrm{pen}}}}
\newcommand{\what}[0]{{\ensuremath{\hat{w}}}}
\newcommand{\vhat}[0]{{\ensuremath{\hat{v}}}}
\newcommand{\cont}[0]{\ensuremath{\cG}}
\newcommand{\discont}[0]{\ensuremath{\dG}}
\newcommand{\T}[0]{{\ensuremath{\mathcal{T}}}}
\newcommand{\Th}[0]{{\ensuremath{\mathcal{T}_h}}}
\newcommand{\TD}[0]{{\ensuremath{\mathcal{T}_{\discont}}}}
\newcommand{\TC}[0]{{\ensuremath{\mathcal{T}_{\cont}}}}
\newcommand{\TCj}[0]{{\ensuremath{\mathcal{T}^j_\cont}}}
\newcommand{\EinTh}[0]{{\ensuremath{E \in \Th}}}
\newcommand{\Eho}[0]{\ensuremath{{\mathcal{E}_h^o}}}
\newcommand{\Eh}[0]{\ensuremath{{\mathcal{E}_h}}}
\newcommand{\EhC}[0]{\ensuremath{{\mathcal{E}_\cont}}}
\newcommand{\EhD}[0]{\ensuremath{{\mathcal{E}_\discont}}}
\newcommand{\EhCj}[0]{\ensuremath{\mathcal{E}^j_\cont}}
\newcommand{\GC}[0]{\ensuremath{{\Gamma_\cont}}}
\newcommand{\GD}[0]{\ensuremath{{\Gamma_\discont}}}
\newcommand{\GCj}[0]{\ensuremath{{\Gamma_\cont^j}}}
\newcommand{\insymbol}[0]{\ensuremath{\text{in}}}
\newcommand{\outsymbol}[0]{\ensuremath{\text{out}}}
\newcommand{\Gout}[0]{\ensuremath{\Gamma^{\outsymbol}}}
\newcommand{\Gin}[0]{\ensuremath{\Gamma^{\insymbol}}}
\newcommand{\bout}[0]{\ensuremath{\partial^{\outsymbol}}}
\newcommand{\bin}[0]{\ensuremath{\partial^{\insymbol}}}
\newcommand{\einEh}[0]{{\ensuremath{e \in \Eh}}}
\newcommand{\einEho}[0]{{\ensuremath{e \in \Eho}}}
\newcommand{\einEhC}[0]{{\ensuremath{e \in \EhC}}}
\newcommand{\einEhCj}[0]{{\ensuremath{e \in \EhCj}}}
\newcommand{\pE}[0]{{\ensuremath{\partial E}}}
\newcommand{\pO}[0]{{\ensuremath{\partial \Omega}}}
\newcommand{\dx}[0]{{\ensuremath{\,\mathrm{d}\vecx}}}
\newcommand{\ds}[0]{{\ensuremath{\,\mathrm{d}s}}}
\newcommand{\DDnoarg}{{\ensuremath{\mathbb{D}}}}
\newcommand{\DD}[1]{{\ensuremath{\DDnoarg(#1)}}}
\newcommand{\KK}[0]{{\ensuremath{\mathbb{K}}}}
\newcommand{\EE}[1]{{\ensuremath{\mathbb{E}(#1)}}}
\newcommand{\ainv}[1]{{\ensuremath{a^{-1}(#1)}}}
\newcommand{\Euler}[0]{{\ensuremath{\mathfrak{d_t}}}}
\newcommand{\SH}[2]{{\ensuremath{H^{#1}(#2)}}}
\newcommand{\SL}[2]{{\ensuremath{L^{#1}(#2)}}}
\newcommand{\dealii}[0]{{\code{deal.ii}}}
\newcommand{\tol}[0]{{\ensuremath{\mathtt{tol}}}}
\begin{document}

\maketitle

%
\begin{abstract}
We prove in an abstract setting that standard (continuous) Galerkin finite element approximations are the limit of interior penalty discontinuous Galerkin approximations as the penalty parameter tends to infinity. We apply this result to equations of non-negative characteristic form and the non-linear, time dependent system of incompressible miscible displacement. Moreover, we investigate varying the penalty parameter on only a subset of a triangulation and the effects of local super-penalization on the stability of the method, resulting in a partly continuous, partly discontinuous method in the limit. An iterative automatic procedure is also proposed for the determination of the continuous region of the domain without loss of stability of the method.
\end{abstract}

%
\section{Introduction}  \label{sec:Introduction}

The discontinuous Galerkin (dG) finite element method has become widely used in recent years for a variety of problems as it possesses several desirable qualities, such as: Good stability properties due to the natural incorporation of upwinding techniques; flexible mesh design as hanging nodes and irregular meshes are admissible; and relatively easy implementation of $hp$-adaptive algorithms. These properties however come with the drawback of an increased number of degrees of freedom compared to a standard conforming method. For instance, when using an axi-parallel quadrilateral mesh in two dimensions with piecewise bilinear elements for which the standard continuous Galerkin (cG) finite element method has approximately $n$ degrees of freedom (depending on boundary conditions) the dG method on the same mesh has approximately $4n$ degrees of freedom.

For advection-dominated advection-diffusion-reaction equations the standard cG method exhibits poor stability properties and non physical oscillations may pollute the approximation globally. Discontinuous Galerkin methods have generally better stability properties. In the case of interior penalty dG method, for instance, stability in the upwind direction has been shown in the $\inf$-$\sup$ sense, e.g.,  in \cite{AM09,BHS06}, generalizing ideas from \cite{JP86}, where purely hyperbolic problems were considered. 

Conceptually, somewhere between the standard cG and interior penalty dG methods lies the continuous-discontinuous Galerkin (cdG) finite element method \cite{CGJ06}, whereby one seeks a Galerkin solution  on a finite element space $\VcdG{}$ with $\VcG{} \subset \VcdG{} \subset \VdG{}$, where $\VcG{}$ and  $\VdG{}$ are the standard cG and dG finite element spaces. In the context of problems with layers or sharp fronts, continuous elements can be used away from the layers/fronts and discontinuous elements (accommodating appropriate upwinding) can be used in the region where the layers/fronts are present. This idea has been studied previously in the context of problems with layers by Dawson and Proft \cite{DP02} using transmission conditions between regions where different spaces are used. Cangiani, Georgoulis and Jensen \cite{CGJ06} and Devloo, Forti and Gomez \cite{DFG07} have previously compared the cdG finite element method with alternative methods for advection-diffusion equations. 

The control of discontinuities across element interfaces in the dG framework can be exercised by introducing and/or tuning the, so-called, jump penalization parameters. Using excessive penalization within a dG approximation will be referred to as the \emph{super penalty method}. It is natural to expect that as the penalty parameter is increased the interelement jumps in the numerical approximation decrease. It has been shown by Larson and Niklasson \cite{LN00} for stationary linear elliptic problems (using the interior penalty method) and by Burman, Quarteroni and Stamm \cite{BQS10} for stationary hyperbolic problems (penalising the jumps of the approximation for discontinuous elements and the jumps in the gradient of the approximation for continuous elements) that the dG approximation converges to the cG approximation as the jump penalization parameter tends to infinity. 

In this work, our aim is twofold. Firstly, we present an alternative proof of the convergence of dG methods to cG methods, using a far more general framework covering the cases considered by \cite{BQS10,LN00} and also non-linear and time dependent problems. Moreover, we show that super-penalization procedures can be localized to designated element faces, thereby arriving to partly continuous, partly discontinuous finite element methods. As particular examples we consider the limits of the interior penalty dG method for PDEs with non-negative characteristic form \cite{HSS02} and the mixed Raviart-Thomas-dG method for the miscible displacement system presented in \cite{BJM09}.

Secondly, we continue the numerical investigations of \cite{CGJ06} in the context of blending locally continuous and discontinuous methods. In particular, we investigate to what extent numerical oscillations appear as local super-penalization is applied. The aim, of course, is to find the extent to which degrees of freedom can be removed by using locally continuous finite element spaces without affecting the extra stability offered by dG methods. To this end, we consider an advection-dominated advection-diffusion problem containing boundary layer behaviour, where the continuous and the discontinuous regions of the finite element solution are tuned manually. A second example investigates the use of an  iterative automatic procedure for the determination of the continuous region  of the domain by local super-penalization without loss of stability of the method. The procedure is applied to the problem of incompressible miscible displacement.

This work is organized as followed. After introducing notation in Section \ref{sec:Notation} an abstract discussion of the limit of penalty methods is given in Section \ref{sec:AbstractDiscussion}. We then show how this framework can be applied to equations of non-negative characteristic form in Section \ref{sec:LinearExample} and to the non-linear equations of incompressible miscible displacement in Section \ref{sec:NonLinearExample}. Finally, Section \ref{sec:Numerics} contains a number of numerical experiments and discussion of an iterative automatic procedure for determining the continuous regions of the approximation.

%
\subsection{Notation} \label{sec:Notation}

Let $\Omega\subset\Reals^d$ be a bounded, open polygonal domain. We 
denote by $\Th$ a subdivision of $\Omega$ into open non-overlapping $d$-simplices $E$. The diameter of $E\in\Th$ is denoted by $h_E$. Let also $\Eh:=\cup_{E\in\Th}\partial E$ be the skeleton of the mesh $\Th$, while $\Eho:=\Eh\backslash\partial\Omega$. Finally, let $\Gamma$ denote the set of elemental boundary faces, i.e., those which lie in $\pO$.

For $e \in \Eho$, with $e = \bar{E}^+ \cap \bar{E}^-$ for $E^+$, $E^- \in \Th$, we define $h_e := \min(h_{E^-},h_{E^+})$. Given a generic scalar field $\nu: \Omega \to \Reals$ that may be discontinuous across $e$, we set $\nu^\pm := \nu|_{E^\pm}$, the interior trace on $E^\pm$ and, similarly, for a generic vector field $\vectau : \Omega \to \Reals^d$. Define the average and jump for a generic scalar as
\begin{equation*}
	\Average{\nu} := \frac{1}{2} (\nu^+ + \nu^-), \qquad \Jump{\nu} := \nu^+ n^+ + \nu^- n^-, \qquad \mathrm{on}~ e \in \Eho, 
\end{equation*}
and for a generic vector field as
\begin{equation*}
	\Average{\vectau} := \frac{1}{2} (\vectau^+ + \vectau^-), \qquad \Jump{\vectau} := \vectau^+ \cdot n^+ + \vectau^- \cdot n^-, \qquad \mathrm{on}~ e \in \Eho,
\end{equation*}
where $n^\pm$ is the outward pointing normal from $E^\pm$ on $e$. For $e \in \Gamma$ the definitions become
\begin{equation*}
	\average{\nu} := \nu, \qquad \jump{\nu} := \nu n, \qquad \average{\vectau} := \vectau, \qquad \mathrm{on}~ e \in \Gamma. 
\end{equation*}

Given a vector $b$ denote the inflow and outflow boundaries of $\Omega$ by
\begin{align*}
		\bin \Omega \equiv \Gin := & \set{\vecx \in \pO : b \cdot n \le 0}, \\
		\bout \Omega \equiv \Gout := & \set{\vecx \in \pO : b \cdot n > 0} 
\end{align*}
and for an element
\begin{align*}
		\bin E := & \set{\vecx \in \pE : b \cdot n \le 0}, \\
		\bout E := & \set{\vecx \in \pE : b \cdot n > 0}.
\end{align*}
We denote the trace of a function $\nu$ on an edge by $\nu^\insymbol$ (resp.~$\nu^\outsymbol$) on the side of the edge where $b\cdot n \le 0$ (resp.~$b\cdot n > 0$). We construct the mesh so that the sign of $b \cdot n$ is the same for every $\vecx \in e$.

For the cdG method, we will require the following additional notation. We identify a decomposition of our triangulation $\Th$ into two disjoint triangulations $\TD$ and $\TC := \Th \setminus \TD$, upon which continuous and discontinuous elements will be applied respectively, henceforth referred to as the continuous and discontinuous regions of the triangulation. Define $J := \overline{\T}_\cG\cap\overline{\T}_\dG$ and define $\EhD$ to be the skeleton of $\TD$ and $\EhC := \Eh \setminus \EhD$. Note that with this definition the faces in $J$ are part of the discontinuous skeleton $\EhD$ only. Define $\GC := \TC \cap \Gamma$, the set of boundary faces of the continuous region, and similarly $\GD := \TD \cap \Gamma$.

Finally, we will denote by $\nablah$ the elementwise divergence operator.

%
\section{An abstract discussion} \label{sec:AbstractDiscussion}

Consider a (possibly non-linear) operator $\Bform:W\times W \rightarrow \Reals$ where $W$ is a finite dimensional vector space with norm $\norm{\,\cdotp}_W$. Suppose there exists a decomposition of $W$ such that $V \oplus X = W$ for $V, X \subset W$. In particular this means we can write any $w \in W$ uniquely as $w = v + x$ for some $v \in V$ and $x \in X$.

Assume that $\Bform$ is coercive, i.e., there exists $\Lambda_W > 0$ (typically independent of the dimension of $W$), such that
\begin{equation} \label{Bformcoercivity}
  \Bform(w,w) \ge \Lambda_W \norm{w}^2_W \qquad \forall~w \in W.
\end{equation}

Consider another operator $\Sform:W\times W \rightarrow \Reals$, whose support is restricted to $X \times X$ in the sense that
\begin{equation} \label{Sform1}
  \Sform(v, \vhat) = 0 \qquad \forall~ v,\vhat  \in V
\end{equation}
and
\begin{equation} \label{Sform2}
  \Sform(v,x) = \Sform(x, v) = 0 \qquad \forall~ v \in V, ~x \in X.
\end{equation}
We require coercivity on $X$, i.e., there exists $\Lambda_X >0$ such that for all $x \in X$
\begin{equation} \label{Sformcoercivity}
  \Sform(x, x) \ge \Lambda_X \norm{x}^2_X,
\end{equation}
where $\norm{x}_X$ is a norm on $X$. In view of \eqref{Sform1} this gives $\Sform(w,w) \ge \Lambda_X \norm{w}^2_X = \Lambda_X \norm{x}^2_X$. We construct a further operator
\begin{equation} \label{Bsigma}
  \Bform_\sigma := \Bform + \sigma \Sform
\end{equation}
where $0 \le \sigma \in \Reals$, and call this the \emph{super penalised} bilinear form.

Let $\ell$ be an element of the dual space $W^*$ of $W$, independent of $\sigma$. Then choose $w_\sigma \in W$ such that
\begin{equation} \label{wsigma}
  \Bform_\sigma (w_\sigma, w) = \ell(w) \qquad \forall~w\in W.
\end{equation}
Also choose $v_h \in V$ such that
\begin{equation} \label{vc}
  \Bform (v_h, v) = \ell (v) \qquad \forall~v \in V.
\end{equation}
Observe that for all $\sigma \in \Reals$
\begin{equation}
  \Bform_\sigma (v_h, v) = \Bform (v_h, v) = \ell (v) \qquad \forall~v \in V
\end{equation}
using \eqref{Sform1}. Now with \eqref{Bformcoercivity}, \eqref{Sformcoercivity} and \eqref{wsigma} we have
\begin{align*}
   \Lambda_W \norm{w_\sigma}^2_W + \sigma \Lambda_X \norm{w_\sigma}^2_X&\le \Bform(w_\sigma,w_\sigma) + \sigma \Sform(w_\sigma,w_\sigma)\\
  &= \Bform_\sigma (w_\sigma,w_\sigma)\\
  &= \ell(w_\sigma)\\
  &\le \norm{\ell}_{W^*}  \norm{w_\sigma}_W.
\end{align*}
Using Young's inequality we see
\begin{equation} \label{wsigmabounded}
  \frac{\Lambda_W^2}{\sigma} \norm{w_\sigma}^2_W + 2 \Lambda_W \Lambda_X \norm{w_\sigma}^2_X \le \frac{1}{\sigma} \norm{\ell}^2_{W^*}.
\end{equation}
Each of $\Lambda_W$, $\Lambda_X$ and $\norm{\ell}_{W^*}$ are independent of $\sigma$. We write $w_\sigma = v_\sigma + x_\sigma$, the unique decomposition with $v_\sigma \in V$ and $x_\sigma \in X$. From \eqref{wsigmabounded} we see
\begin{equation} \label{limitX}
  \lim_{\sigma \rightarrow \infty} \norm{v_\sigma + x_\sigma}_X = \lim_{\sigma \rightarrow \infty} \norm{x_\sigma}_X = 0.
\end{equation}
Therefore $x_\sigma \rightarrow 0$ as $\sigma \rightarrow \infty$.

Now assume that $\Bform$ is continuous in the first argument in the following sense: If $\lim_{i \rightarrow \infty} w_i = w \in W$ then
\begin{equation} \label{Bformcontinuity}
  \lim_{i \rightarrow \infty} \Bform(w_i,v) = \Bform(w,v) \qquad \forall~v\in V.
\end{equation}

Suppose $w_\sigma \nrightarrow v_h$ as $\sigma \to \infty$. Then there exists $\epsilon > 0$ such that there is some sequence $\set{w_{\sigma(i)}}_i$ with $\sigma(i) \to \infty$ as $i \to \infty$ satisfying
\begin{equation} \label{wsigma-contradiction}
  \norm{w_{\sigma(i)} - v_h}_W > \epsilon \qquad \forall i\in \Naturals.
\end{equation}
Owing to \eqref{wsigmabounded} the sequence $\set{w_{\sigma(i)}}_i$ is a bounded subset of $W$. Then by the Heine-Borel theorem there exists a convergent subsequence, also denoted $\set{w_{\sigma(i)}}_i$, such that
\begin{equation} \label{wtildelimit}
  \tilde{w} = \lim_{i \rightarrow \infty} w_{\sigma(i)}.
\end{equation}
Considering \eqref{limitX} we know that $\tilde{w} \in V$. We have that for all $v \in V$
\begin{align*}
  \Bform(\tilde{w},v) &= \Bform \left(\lim_{i \to \infty} w_{\sigma(i)}, v \right)\\
  &= \lim_{i \to \infty} \Bform(w_{\sigma(i)},v) &&\textrm{by }\eqref{Bformcontinuity}\\
  &= \lim_{i \to \infty} \Bform_\sigma (w_{\sigma(i)},v) &&\textrm{by }\eqref{Sform2}\\
  &= \lim_{i \to \infty} \ell(v) &&\textrm{by }\eqref{wsigma}\\
  &= \ell(v).
\end{align*}
Hence $\tilde{w}$ satisfies \eqref{vc} and by \eqref{wtildelimit} we have
\begin{align*}
  \lim_{i \to \infty} \norm{w_{\sigma(i)} - v_h}_W &= 0.
\end{align*}
This contradicts \eqref{wsigma-contradiction} and we conclude that all subsequences $\set{w_{\sigma(i)}}_i$ converge to $v_h$. Therefore 
\begin{equation}
  \lim_{\sigma \rightarrow \infty} (w_\sigma - v_h) = 0.
\end{equation}

We finally remark on the potential loss of stability due to super-penalization. It can be seen from \eqref{wsigmabounded} that as $x_\sigma\to 0$ when $\sigma\to \infty$ the coercivity of $\Bform_\sigma$ is increasingly compromised, which can lead to loss of stability and reduction on the rate of convergence in various settings.
%
\section{Equations of Non-Negative Characteristic Form} \label{sec:LinearExample}

We now examine the diffusion-advection-reaction equation (see \cite{HSS02})
\begin{equation} \label{ard-eqn}
  \begin{split}
  -\nabla \cdot (\matrixa(\vecx) \nabla u) + b(\vecx)\cdot \nabla u + c(\vecx)u &=f(\vecx) \qquad\text{in }\Omega,\\
  u&=0 \qquad\text{on }\pO
  \end{split}
\end{equation}
with $b$ a $\Reals^d$ valued function whose entries are Lipschitz continuous on $\overline{\Omega}$, $c \in \SL{\infty}{\Omega}$ and $f \in \SL{2}{\Omega}$ real valued functions. The diffusion coefficient $\matrixa$ is a $d\times d$ symmetric matrix with entries being bounded, piecewise continuous real-valued functions defined on $\overline{\Omega}$, with
\begin{align*}
  \zeta^\transpose \matrixa \zeta \ge 0 \qquad \forall \zeta \in \Reals^d, ~\mathrm{a.e.~}\vecx\in \overline{\Omega}.
\end{align*}
Under this condition, \eqref{ard-eqn} is named a \emph{partial differential equation with non-negative characteristic form}.

We define the cdG space to be
\begin{equation} \label{VcdG}
\begin{split}  
  \VcdG{} &:= \set{ v \in \SL{2}{\Omega} : \forall \EinTh,  v|_E \in \Poly^r, v|_{\GC} = 0, v|_{\TC} \in C(\overline{\T}_\cG)}
\end{split}
\end{equation}
where $\Poly^{r}$ is the space of polynomials of degree at most $r$ supported on $E$. We define the dG space to be
\begin{equation} \label{VdG}
  \VdG{} := \set{ v \in \SL{2}{\Omega} : \forall \EinTh, v|_E \in \Poly^r}.   
\end{equation}
Finally we define $\VdG{} := \VcdG{} \oplus \Vorth{}$ (corresponding to $W := V \oplus X$ in the notation of Section \ref{sec:AbstractDiscussion}). Note that the standard continuous space is obtained by setting $\Th = \TC$.

Define $\Bform:\VdG{} \times \VdG{} \to \Reals$, the bilinear form for the interior penalty family of methods with $\theta \in \set{-1,0,1}$ for \eqref{ard-eqn}, by
\begin{equation}
  \begin{split}
    \Bform(w,\what) &:= \Bform_d(w,\what) + \Bform_{ar}(w,\what)
  \end{split}
\end{equation}
with
\begin{equation}
  \begin{split}
    \Bform_d(w,\what) &:= \sum_\EinTh \int_E \matrixa \nablah w \cdot \nablah \what \dx + \sum_\einEh \int_e  m \jump{w}\cdot\jump{\what} \ds\\
    &\qquad - \sum_\einEh \int_e \Big(\average{\matrixa\nablah w}\cdot \jump{\what} - \theta \average{\matrixa\nablah \what}\cdot\jump{w} \Big) \ds 
  \end{split}
\end{equation}
and
\begin{equation}
  \begin{split}
    \Bform_{ar}(w,\what) &:= \sum_\EinTh \int_E (b \cdot \nablah w) \what + cw \what \dx\\
    &\qquad -\sum_\einEho \int_e b \cdot \jump{w} \what^\outsymbol \ds -  \sum_{e \in \Gin}\int_e (b\cdot n) w \what \ds.
  \end{split}
\end{equation}
We define $m := C_p \average{\overline{\matrixa} r^2}/h_e$, $\overline{\matrixa} := \norm{\abs{\sqrt{\matrixa}}_2}_\SL{\infty}{E}$, with $\abs{\,\cdotp}_2$ denoting the matrix-2-norm, and $C_p(\theta)\ge0$ fixed for a given $\theta$. The linear form is given by
\begin{equation}
  \ell(w) := \sum_\EinTh \int_E f w \dx.
\end{equation}
For $e \in \EhC$ we have the additional term $\Sform: \VdG{} \times \VdG{} \to \Reals$ penalising the jumps where
\begin{equation}
  \Sform(w,\what) := \sum_\einEhC \int_e M\jump{w}\cdot\jump{\what} \ds
\end{equation}
and
\begin{equation*}
  M := \left(C_{ar} + C_d \frac{\average{\overline{\matrixa} r^2}}{h_e} \right)
\end{equation*}
with $C_{ar}$ and $C_d$ fixed constants independent of $\sigma$. Then we define $\Bform_{\sigma}(w,\what) := \Bform(w,\what) + \sigma \Sform(w,\what)$.

Observe that if we take $C_{ar}= C_d= 0$ (or $\sigma = 0$) we recover the usual interior penalty method. If we take $C_p = C_d = C_{ar} = 0$ and $\matrixa = 0$ we have the standard (unpenalised) bilinear form for the purely hyperbolic equation (assuming of course that we adjust the boundary conditions appropriately). Taking $C_d = 0$ and $C_{ar} \ne 0$ when $\matrixa=0$ gives the method proposed in \cite{BMS04}.

All functions in $\VcdG{}$ are continuous on edges in $\EhC$ (recall that by definition edges in $J$ are not included in $\EhC$). Therefore conditions \eqref{Sform1} and \eqref{Sform2} are satisfied for this $\Sform$. That is, for any $v, \vhat \in \VcdG{}$ and $x \in \Vorth{}$
\begin{equation} \label{linearSproperties}
  \Sform(v, \vhat) = \Sform(v,x) = \Sform(x,v) = 0.
\end{equation}
We define the following norm for all $w \in \VdG{}$.
\begin{equation} \label{dGnormNNCF}
  \begin{split}
    \norm{w}_\dG^2 &:= \sum_\EinTh \norm{\sqrt{\matrixa}\nablah w}^2_\SL{2}{E} + \norm{c_0 w}^2_\SL{2}{\Omega} \\
    &\qquad +\sum_\einEh \frac{1}{2} \norm{\abs{b\cdot n}^\fhalf \jump{w}}^2_\SL{2}{e} + \sum_\einEh \norm{\sqrt{m} \jump{w}}^2_\SL{2}{e}
  \end{split}
\end{equation}
where $c_0 := \sqrt{c-\fhalf \nablah \cdot b}$. We also define for $w \in \VdG{}$
\begin{equation} \label{Snorm}
  \abs{w}^2_\Sform := \sum_\einEhC \norm{\sqrt{M}\jump{w}}^2_\SL{2}{e}.
\end{equation}
Notice that $\abs{\cdot}_\Sform$ is a semi-norm on $\VdG{}$ but a norm on $\Vorth{}$. To make this distinction clear we will write $\norm{x}_\Sform$ for $x \in \Vorth{}$.

\begin{lemma} \label{thm:linearBcoercivity}
  If $C_p$ is sufficiently large when $\theta = -1$ then $\Bform{}$ is coercive on $\VdG{}$, i.e., for all $w \in \VdG{}$
  \begin{equation} \label{linearBcoercivity}
    \Bform(w,w) \ge \Lambda_{cc} \norm{w}_\dG^2
  \end{equation}
  with $\Lambda_W = 1$ when $\theta = 1$ and $\Lambda_W = \fhalf$ when $\theta = -1$.
\end{lemma}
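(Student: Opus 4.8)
The plan is to verify \eqref{linearBcoercivity} by writing $\Bform = \Bform_d + \Bform_{ar}$ and evaluating each piece at $\what = w$, showing that together they reproduce the norm \eqref{dGnormNNCF} up to the stated constant; no $\inf$--$\sup$ argument is needed here since the (penalised, symmetric) form is genuinely coercive.

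\emph{The advection--reaction part.} I would first integrate by parts elementwise,
\[
  \int_E (b\cdot\nablah w)\, w \dx = \tfrac12\int_\pE (b\cdot n)\, w^2 \ds - \tfrac12\int_E (\nablah\cdot b)\, w^2 \dx ,
\]
so that summing over $\EinTh$ turns the zeroth--order contribution into $\norm{c_0 w}_\SLtwoO^2$ with $c_0 = \sqrt{c-\fhalf\nablah\cdot b}$ --- this is the step that implicitly uses $c - \fhalf\nablah\cdot b \ge 0$ a.e., so that $c_0$ is real. The surviving boundary integrals $\tfrac12\sum_\EinTh\int_\pE(b\cdot n)w^2$ are then regrouped face by face. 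On an interior face the two elemental contributions combine with the upwind term $-\int_e b\cdot\jump{w}\,w^\outsymbol$, and expanding $b\cdot\jump{w} = (b\cdot n^+)(w^+-w^-)$ with $w^\outsymbol$ the downwind trace collapses that face's sum to $\tfrac12\int_e\abs{b\cdot n}\,\abs{\jump{w}}^2$; on boundary faces the inflow term $-\sum_{e\in\Gin}\int_e(b\cdot n)w^2$ combines with the $\pO$ integral to give $\tfrac12\int_\pO\abs{b\cdot n}\,w^2$. Collecting, $\Bform_{ar}(w,w) = \norm{c_0 w}_\SLtwoO^2 + \tfrac12\sum_\einEh\int_e\abs{b\cdot n}\abs{\jump{w}}^2$, which is exactly the second and third sums in \eqref{dGnormNNCF}; in particular it is non-negative.

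\emph{The diffusion part.} Testing $\Bform_d$ with $\what = w$ gives
\[
  \Bform_d(w,w) = \sum_\EinTh \norm{\sqrt{\matrixa}\nablah w}_\SLtwo{E}^2 + \sum_\einEh \norm{\sqrt{m}\jump{w}}_\SLtwo{e}^2 - (1-\theta)\sum_\einEh \int_e \average{\matrixa\nablah w}\cdot\jump{w} \ds .
\]
When $\theta = 1$ the last sum vanishes and $\Bform_d(w,w)$ equals the first and fourth sums in \eqref{dGnormNNCF}; combined with the previous identity this gives $\Bform(w,w) = \norm{w}_\dG^2$, i.e.\ $\Lambda_W = 1$, with no restriction on $C_p$. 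When $\theta = -1$ the prefactor is $(1-\theta) = 2$ and the consistency term must be absorbed. I would bound it face by face by Cauchy--Schwarz, $\abs{\int_e\average{\matrixa\nablah w}\cdot\jump{w}} \le m^{-1/2}\norm{\average{\matrixa\nablah w}}_\SLtwo{e}\norm{\sqrt{m}\jump{w}}_\SLtwo{e}$, then use the discrete trace inverse estimate $\norm{\matrixa\nablah w^\pm}_\SLtwo{e}^2 \le C_{tr}\,r^2 h_{E^\pm}^{-1}(\overline{\matrixa}^\pm)^2\norm{\sqrt{\matrixa}\nablah w}_\SLtwo{E^\pm}^2$, together with $h_e = \min(h_{E^-},h_{E^+}) \le h_{E^\pm}$ and the definition $m = C_p\average{\overline{\matrixa}r^2}/h_e$. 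Boundedness of $\matrixa$ gives $(\overline{\matrixa}^\pm)^2 \le \norm{\overline{\matrixa}}_{\SLinfty{\Omega}}(\overline{\matrixa}^++\overline{\matrixa}^-)$, so $\norm{\average{\matrixa\nablah w}}_\SLtwo{e}^2/m$ is controlled by $C_{tr}\norm{\overline{\matrixa}}_{\SLinfty{\Omega}}/C_p$ times the two elemental Dirichlet energies, and summing over faces (each element recurring at most $d+1$ times) yields $\sum_\einEh\norm{\average{\matrixa\nablah w}}_\SLtwo{e}^2/m \le (C^*/C_p)\sum_\EinTh\norm{\sqrt{\matrixa}\nablah w}_\SLtwo{E}^2$ with $C^* := (d+1)C_{tr}\norm{\overline{\matrixa}}_{\SLinfty{\Omega}}$. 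Applying Young's inequality with parameter $\delta$ to split $2\int_e\average{\matrixa\nablah w}\cdot\jump{w}$, then taking $\delta = \fhalf$ and any $C_p \ge 4C^*$, the negative contributions consume at most half of $\sum_\EinTh\norm{\sqrt{\matrixa}\nablah w}_\SLtwo{E}^2$ and half of $\sum_\einEh\norm{\sqrt{m}\jump{w}}_\SLtwo{e}^2$; hence $\Bform_d(w,w) \ge \fhalf$ times the first plus fourth sums in \eqref{dGnormNNCF}, and with $\Bform_{ar}(w,w) \ge \fhalf$ times the second plus third sums we conclude $\Bform(w,w) \ge \fhalf\norm{w}_\dG^2$, i.e.\ $\Lambda_W = \fhalf$.

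\emph{Main obstacle.} Everything above is routine bookkeeping except the $\theta = -1$ absorption, where the care lies in matching the $\overline{\matrixa}$--weighted penalty $m$ against the $\matrixa$--weighted consistency flux so that the diffusion-dependent constants cancel (boundedness of $\matrixa$ being used to dispose of the residual $\overline{\matrixa}$ factor), and in tracking the dependence on the mesh valence and on the trace constant so as to exhibit an explicit threshold $C_p \ge 4(d+1)C_{tr}\norm{\overline{\matrixa}}_{\SLinfty{\Omega}}$ below which coercivity with constant $\fhalf$ may be lost. The discrete trace inverse estimate is invoked in whichever form is compatible with the assumed (piecewise continuous, possibly degenerate) regularity of $\matrixa$, which is the one point where a standing structural hypothesis on $\matrixa$ is tacitly used.
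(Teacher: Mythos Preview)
Your argument is correct and is precisely the standard coercivity proof for the interior penalty family: the elementwise integration-by-parts identity for $\Bform_{ar}$ yielding the $c_0$-weighted $L^2$ term plus the upwind jump control, and the trace-inverse/Young absorption of the consistency flux in $\Bform_d$ when $\theta=-1$. The paper does not give its own proof at all---it simply refers the reader to \cite{HSS02}---so you have in effect supplied the argument that the paper defers. Your treatment of the $\overline{\matrixa}$-weighted penalty against the $\matrixa$-weighted flux, and your closing remark on the implicit structural assumption on $\matrixa$ needed for the trace inverse estimate to apply to $\matrixa\nablah w$, match the level of care in the cited reference.
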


\begin{proof}
  See, e.g., \cite{HSS02} for a proof.
\end{proof}

From the definition it is clear that $\Sform$ is coercive with constant one on $\Vorth{}$, i.e., for all $x \in \Vorth{}$
\begin{equation} \label{linearScoercivity}
  \Sform(x, x) = \norm{x}^2_\Sform.
\end{equation}

\begin{definition}
  Define a dG approximation to \eqref{ard-eqn} as $w_\sigma \in \VdG{}$ satisfying
  \begin{equation} \label{lineardGFEM}
    \Bform_\sigma (w_\sigma,w) = \ell(w) \qquad \forall w \in \VdG{}.
  \end{equation}
\end{definition}
\begin{definition}
  Define a cdG approximation to \eqref{ard-eqn} as $v_h \in \VdG{}$ satisfying
  \begin{equation} \label{linearcdGFEM}
    \Bform_\sigma (v_h,v) = \ell(v) \qquad v \in \VcdG{}.
  \end{equation}
\end{definition}
Using \eqref{linearSproperties} we see that $v_h$ also satisfies $\Bform(v_h,v) = \ell(v)$ for all $v \in \VcdG{}$.

\begin{theorem} \label{thm:ARDconvergenceNNCF}
  The dG finite element approximation $w_\sigma$ converges to the cdG finite element approximation $v_h$ as $\sigma \to \infty$, i.e.,
  \begin{equation*}
    \lim_{\sigma \to \infty} (w_\sigma - v_h) = 0.
  \end{equation*}
\end{theorem}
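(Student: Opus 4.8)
\emph{Proof proposal.} The plan is to obtain Theorem~\ref{thm:ARDconvergenceNNCF} as a direct instance of the abstract convergence argument of Section~\ref{sec:AbstractDiscussion}: essentially all the substantive work has already gone into arranging the spaces and forms so that the hypotheses \eqref{Bformcoercivity}, \eqref{Sform1}, \eqref{Sform2}, \eqref{Sformcoercivity} and \eqref{Bformcontinuity} hold, and what remains is to check the dictionary and invoke the conclusion. Concretely, I would set $W = \VdG{}$ equipped with the norm $\norm{\cdot}_\dG$ of \eqref{dGnormNNCF}, $V = \VcdG{}$, and $X = \Vorth{}$ equipped with the norm $\norm{\cdot}_\Sform$ induced by \eqref{Snorm} (which is indeed a norm, not merely a seminorm, on $\Vorth{}$), so that $W = V\oplus X$ by construction; the penalised form $\Bform_\sigma = \Bform + \sigma\Sform$ is exactly \eqref{Bsigma}; the functional $\ell$ is independent of $\sigma$; and the discrete problems \eqref{lineardGFEM} and \eqref{linearcdGFEM} are, word for word, \eqref{wsigma} and \eqref{vc}.

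Next I would verify the five structural hypotheses. Coercivity of $\Bform$ on $\VdG{}$, i.e.\ \eqref{Bformcoercivity} with $\Lambda_W$ the constant of Lemma~\ref{thm:linearBcoercivity}, is exactly that lemma (requiring $C_p$ sufficiently large in the nonsymmetric case $\theta = -1$). The localisation conditions \eqref{Sform1}--\eqref{Sform2} for $\Sform$ are \eqref{linearSproperties}: every $v\in\VcdG{}$ has vanishing jump on each $e\in\EhC$ (the faces of $J$ being excluded from $\EhC$), hence $\Sform(v,\what) = \sum_{e\in\EhC}\int_e M\jump{v}\cdot\jump{\what}\ds = 0$ for all $\what\in\VdG{}$. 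Coercivity of $\Sform$ on $X$, i.e.\ \eqref{Sformcoercivity} with $\Lambda_X = 1$, is \eqref{linearScoercivity}. Continuity of $\Bform$ in the first argument in the sense of \eqref{Bformcontinuity} is automatic here: $\Bform$ is bilinear and $\VdG{}$ is finite dimensional, so $w\mapsto\Bform(w,v)$ is linear, hence continuous, for each fixed $v\in\VcdG{}$. Finally $\ell\in W^*$ is independent of $\sigma$ by definition.

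With the dictionary in place the abstract argument applies verbatim: the energy estimate \eqref{wsigmabounded} furnishes bounds on $\norm{w_\sigma}_\dG$ and on $\norm{x_\sigma}_\Sform$ that are uniform in $\sigma$, whence $x_\sigma\to 0$ in $\Vorth{}$ and in particular every limit point of $w_\sigma$ as $\sigma\to\infty$ lies in $\VcdG{}$; the Heine--Borel theorem (using that $\norm{\cdot}_\dG$ is a genuine norm on the finite-dimensional space $\VdG{}$) extracts convergent subsequences, and the continuity \eqref{Bformcontinuity} together with \eqref{Sform2} and \eqref{wsigma} identifies each such limit as a solution of \eqref{vc}; since \eqref{vc} has the unique solution $v_h$, every subsequence converges to $v_h$, so $w_\sigma\to v_h$ as $\sigma\to\infty$. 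I do not expect a genuine obstacle here — the content is bookkeeping — but the one point deserving a word of care is the verification that $\norm{\cdot}_\dG$ is positive definite on $\VdG{}$, so that boundedness in $\norm{\cdot}_\dG$ really does yield a bounded subset to which Heine--Borel applies; under the standing assumptions on $\matrixa$, $b$ and $c$ this holds, consistently with $\Bform$ being coercive with respect to $\norm{\cdot}_\dG$.
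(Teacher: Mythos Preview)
Your proposal is correct and follows precisely the paper's own approach: the paper's proof simply invokes the abstract argument of Section~\ref{sec:AbstractDiscussion}, citing Lemma~\ref{thm:linearBcoercivity} for \eqref{Bformcoercivity}, \eqref{linearScoercivity} for \eqref{Sformcoercivity}, and observing that \eqref{Bformcontinuity} holds because linear maps on finite-dimensional spaces are continuous. Your writeup is just a more explicit version of the same dictionary-and-verify argument.
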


\begin{proof}
Following the argument of Section \ref{sec:AbstractDiscussion} we use Lemma \ref{thm:linearBcoercivity} and \eqref{linearScoercivity} and note that \eqref{Bformcontinuity} is satisfied as linear operators in finite-dimensional vector spaces are continuous.
\end{proof}

\section[Incompressible Miscible Displacement]{Incompressible Miscible Displacement} \label{sec:NonLinearExample}

We consider the problem of finding the numerical solution to the coupled equations for the pressure $p=p(t,\vecx)$, Darcy velocity $u=u(t,\vecx)$ and concentration $c=c(t,\vecx)$ of one incompressible fluid in a porous medium being displaced by another. We consider the miscible case where both fluids are in the same phase.

Consider the domain $\Omega_T := (0,T) \times \Omega$. The equations for the miscible displacement are given by (e.g., \cite{Bas99,Bea88})
\begin{align}
	\phi \frac{\partial c}{\partial t} + u\cdot \nabla c - \nabla\cdot(\DD{u}\nabla c) + cq^I &= \hat{c}q^I \label{IMDtransport}, \\
	\nabla \cdot u &= q^I - q^P, \label{IMDcontinuity} \\
	u &= -\frac{\KK}{\mu (c)} \left( \nabla \cdot p - \rho (c)g \right) \label{IMDdarcy}
\end{align}
with the boundary conditions on $\pO_T := (0,T) \times \pO$ given by
\begin{align}
	u \cdot n &= 0 \label{IMDbc1}\\
	(\DD{u} \nabla c) \cdot n &=0, \label{IMDbc2}
\end{align}
and the initial conditions
\begin{equation}
	c(0,\cdot) = c_0 \label{IMDic}.
\end{equation}
We denote by: $\phi(\vecx)$ the porosity of the medium; $q^I\ge0$ and $q^P\ge0$ the pressure at injected (source) and production (sink) wells; $\KK(\vecx)$ the absolute permeability of the medium; $\mu(c)$ the viscosity of the fluid mixture; $\rho(c)$ the density of the fluid mixture; $g$ the constant vector of gravity; $\DD{u,\vecx}$ the diffusion-dispersion coefficient; $\hat{c}$ the injected concentration; and $c_0$ the initial concentration, which we assume for simplicity to be 0. We define $\ainv{c} := \KK^{-1} \mu$. The coupling is non-linear through the coefficients $\DD{u,\vecx}$, $\mu(c)$ and the advection term.  We make the common specific choice for the diffusion dispersion tensor, e.g., \cite{CE99,Fen95,SRW02}
\begin{equation} \label{DD-specific}
  \DD{u,\vecx} = \phi \left( d_m \Id + \abs{u}d_l \EE{u} + \abs{u}d_t (\Id - \EE{u})\right)
\end{equation}
where $\EE{u} = u u^\transpose / \abs{u}^2$ and $\Id$ is the identity matrix. We specify that the molecular, longitudinal and transverse diffusion coefficients $d_m$, $d_l$ and $d_t$ are positive real numbers.

We solve for the pressure and velocity using a Raviart-Thomas (RT) procedure \cite{Dur08,RT77} and for the concentration using a cdG method. We refer to the whole scheme as a RT-cdG method. For $k \ge 0$ we define
\begin{equation*}
  \begin{split}
  U &:= \set{v \in (\SL{2}{\Omega})^2: v|_E \in (\Poly_k(E))^2 + \vecx\Poly_k(E) ~\forall \EinTh,\\
  & \qquad v \cdot n \text{ continuous on }\einEho}.
  \end{split}
\end{equation*}
To avoid confusion for the pressure terms we define the space $P := \VdG{}$ where $\VdG{}$ is defined in \eqref{VdG}. Then the velocity and pressure are approximated in $U \times P$. To simplify the presentation we use the same mesh $\Th$ to solve for $u$, $p$ and $c$ numerically at each time step and there is no refinement of the mesh or polynomial degree. However $\TC$ and $\TD$ are not fixed so the cdG space used to approximate $c$ will vary with time. We define the time dependent cdG space by
\begin{equation*}
  \begin{split}
  \VtcdG{j} := \set{ v \in \SL{2}{\Omega} : \forall \EinTh,  v|_E \in \Poly^r, v|_{\GCj} = 0, v|_{\TCj} \in C(\overline{\T}_\cG^j)}
  \end{split}
\end{equation*}
where $\TCj$ and $\GCj$ are the $\TC$ region and external boundary of $\TCj$ at time $t_j$. As we assert that no change to the shape of the mesh occurs in time we define the time dependent dG space as in \eqref{VdG}. Then we may define $\VtdG{j} := \VtcdG{j} \oplus \Vtorth{j}$. Note that the degree $k$ is the same for $U$ and $P$ but need not be equal to $r$, the degree of the polynomials used to approximate concentration.

Let $0=t_0<t_1<\ldots< t_N =T$ be a partition of the time interval $(0,T)$. For simplicity we assume that each time step is of equal length and define $\Delta t:= t_j - t_{j-1}$ and the backward Euler operator $\Euler c_h^j := (\Delta t)^{-1} (c^j_h - c_h^{j-1})$ for $j=1,2,\ldots,N$. For the diffusion part of the concentration equation define the bilinear form
\begin{equation} \label{dGB_d}
	\begin{split}
		\Bform_d(c^j_h, d^j_h; u^j_h) &= \sum_{\EinTh} ( \DD{u^j_h} \nablah c^j_h, \nablah d^j_h )_E + \sum_\einEho m ( \jump{c^j_h},\jump{d^j_h} )_e  \\
		& \quad - \sum_\einEho \left[(\jump{c^j_h}, \average{\DD{u^j_h}\nablah d^j_h})_e + (\jump{d^j_h},\average{\DD{u^j_h}\nablah c^j_h})_e \right]
	\end{split}
\end{equation}
for all $d_h^j \in \VtcdG{j}$. The penalty parameter $m$ is defined by \cite{BJM09}
\begin{align*}
	m^2 : \Eh \to \Reals, \quad \vecx \mapsto C_\pen \frac{\max\{ n^\transpose_\Eh \DD{u_h^{j,+}, \vecx} n_\Eh , n^\transpose_\Eh \DD{u_h^{j,-}, \vecx} n_\Eh \}}{h}
\end{align*}
and $C_\pen$ is chosen such that it is larger than
\begin{align*}
	\sup \left\{ h \max \left\{ \frac{ \norm{\nu_h}^2_\pE}{\norm{\nu_h}^2_E} , \frac{\norm{D^\fhalf \nablah \nu_h}^2_\pE}{\norm{D^\fhalf \nablah \nu_h}^2_E} \right\} : \nu_h \in \Poly^s, D \in [\Poly^s]^{d \times d}, E ~\textrm{shape regular} \right\}.
\end{align*}

The bilinear form for convection, production and injection is given by the non-standard form
\begin{equation} \label{dGB_cq}
	\begin{split}
		&\Bform_{cq}(c^j_h,d^j_h;u^j_h) \\
		&= \frac{1}{2}\sum_\EinTh \left[ (u^j_h \cdot \nablah c^j_h, d^j_h)_E - (u^j_h c^j_h, \nablah d^j_h ) + ((q^I+q^P) c^j_h, d^j_h)_E \right] \\
		& \quad +\frac{1}{2}\sum_{\einEho} (u^j_h \cdot\jump{c^j_h}, d_h^{j,*})_e
	\end{split}
\end{equation}
where $d_h^{j,*}$ is defined by
\begin{equation}
	d_h^{j,*} = \left\{ \begin{array}{ll}
	                	d_h^{j,-} & \mathrm{if}~ u^j_h \cdot n^+ > 0,\\
	                	d_h^{j,+} & \mathrm{if}~ u^j_h \cdot n^+ \le 0.
	                \end{array} \right.
\end{equation}
This formulation ensures that $\Bform_{cq}$ is semi-definite regardless of the properties of $u_h^j$. We do not need to restrict sums over edges to cells in $\TC$ as in this region elements of $\VcdG{}$ are continuous. Also note that the dG method is a special case of the cdG method where $\TC = \emptyset$.

Define $\Bform(c^j_h,d^j_h;u^j_h) := \Bform_d(c^j_h, d^j_h; u^j_h) + \Bform_{cq}(c^j_h,d^j_h;u^j_h)$ and
\begin{equation}
  \Sform(c^j_h,d^j_h) := \sum_\einEhCj \int_e M \jump{c^j_h}\cdot\jump{d^j_h} \ds
\end{equation}
where
\begin{equation*}
  M := \left(C_d \frac{r^2}{h_e} \right).
\end{equation*}
Then for any $c_h^j, d_h^j \in \VtcdG{j}$ and $x^j \in \Vtorth{j}$
\begin{equation*}
  \Sform(c_h^j, d_h^j) = \Sform(c_h^j, x^j) = \Sform(x^j, c_h^j) = 0.
\end{equation*}
We define the following norm for $u_h^j \in U$ and $c_h^j \in \VtdG{j}$:
\begin{equation}
  \begin{split}
    \triple{c^j_h}^2 &:= \sum_\EinTh \norm{\sqrt{\DD{u^j_h}}\nablah c^j_h}^2_\SL{2}{E} + \frac{1}{2}\norm{q_0 c^j_h}^2_\SL{2}{\Omega} \\
    &\qquad +\sum_\einEho \frac{1}{2} \norm{\abs{u^j_h\cdot n}^\fhalf \jump{c^j_h}}^2_\SL{2}{e} + \sum_\einEho \norm{\sqrt{m} \jump{c^j_h}}^2_\SL{2}{e}
  \end{split}
\end{equation}
where $q_0 := \sqrt{q^I + q^P}$. For $c_h^j \in \VtdG{j}$ define
\begin{equation} \label{SnormNL}
  \abs{c^j_h}^2_\Sform := \sum_\einEhCj \norm{\sqrt{M}\jump{c^j_h}}_\SL{2}{e}.
\end{equation}
Notice that \eqref{SnormNL} is a semi-norm on $\VtdG{j}$ but a norm on $\Vtorth{j}$.

\begin{lemma} \label{thm:nonlinearBcoercivity}
  If $C_\pen$ is chosen large enough then $\Bform$ is coercive for all $c_h^j \in \VtdG{j}$ and $u_h^j \in U$, i.e.,
  \begin{equation} \label{nonlinearBcoercivity}
    \Bform(c^j_h,c^j_h;u^j_h) \ge \Lambda_W \triple{c_h^j}^2.
  \end{equation}
\end{lemma}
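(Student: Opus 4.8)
The plan is to mirror the proof of Lemma~\ref{thm:linearBcoercivity} and its sources \cite{HSS02,BJM09}: split $\Bform(c_h^j,c_h^j;u_h^j)=\Bform_d(c_h^j,c_h^j;u_h^j)+\Bform_{cq}(c_h^j,c_h^j;u_h^j)$ and bound each summand from below by the corresponding part of $\triple{c_h^j}^2$.

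First consider $\Bform_{cq}$. On the diagonal the element convection contribution $\sum_{\EinTh}\bigl[(u_h^j\cdot\nablah c_h^j,c_h^j)_E-(u_h^j c_h^j,\nablah c_h^j)_E\bigr]$ vanishes identically (the two summands coincide), the production/injection term gives exactly $\tfrac12\norm{q_0 c_h^j}^2_{\SLtwoO}$, and for the interface term one uses that $u_h^j\cdot n$ is single valued on each $e\in\Eho$ (by definition of $U$) and vanishes on $\pO$ by \eqref{IMDbc1}; a short algebraic rearrangement of the upwind/downwind flux then shows that this term equals $\tfrac12\sum_{\einEho}\norm{\abs{u_h^j\cdot n}^\fhalf\jump{c_h^j}}^2_{\SLtwo{e}}\ge 0$. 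Hence $\Bform_{cq}$ is positive semi-definite irrespective of $u_h^j$ and already supplies the $q_0$-- and advective-jump terms of $\triple{c_h^j}^2$.

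It remains to treat $\Bform_d$. Its element term and interior-penalty term contribute $\sum_{\EinTh}\norm{\sqrt{\DD{u_h^j}}\nablah c_h^j}^2_{\SLtwo{E}}$ and $\sum_{\einEho}\norm{\sqrt m\jump{c_h^j}}^2_{\SLtwo{e}}$, so it suffices to absorb the two symmetric consistency terms $-2\sum_{\einEho}(\jump{c_h^j},\average{\DD{u_h^j}\nablah c_h^j})_e$ into them. Since $\jump{c_h^j}$ is normal directed, only the normal flux $n^\transpose\DD{u_h^j}\nablah c_h^j$ enters, and the factorisation $\DD{u_h^j}=\sqrt{\DD{u_h^j}}\,\sqrt{\DD{u_h^j}}$ together with Cauchy--Schwarz gives $\abs{n^\transpose\DD{u_h^j}\nablah c_h^j}\le(n^\transpose\DD{u_h^j}n)^\fhalf\,\abs{\sqrt{\DD{u_h^j}}\nablah c_h^j}$. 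Combining a Young inequality weighted by $m$, the definition of the penalty weight $m$ — which carries precisely the factor $\max\{n^\transpose\DD{u_h^{j,\pm}}n\}/h$ required to cancel the normal conductivity just produced — and the discrete trace inequality $h\norm{\sqrt D\nablah\nu_h}^2_{\pE}\le C\norm{\sqrt D\nablah\nu_h}^2_E$, whose constant is, uniformly over $\nu_h\in\Poly^s$, matrix fields $D\in[\Poly^s]^{d\times d}$ and shape-regular $E$, bounded by the supremum entering the definition of $C_\pen$, one estimates the consistency terms by a fraction of the element and penalty terms (the bounded number of faces per element and the factor $\tfrac12$ from the averages being harmless). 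Choosing $C_\pen$ sufficiently large leaves a positive remainder of both; adding the $\Bform_{cq}$ bound then yields \eqref{nonlinearBcoercivity} with $\Lambda_W$ the minimum of the constants produced.

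I expect the $\Bform_d$ step to be the main obstacle. Unlike in Lemma~\ref{thm:linearBcoercivity}, the diffusion tensor here is the genuinely non-smooth finite element quantity $\DD{u_h^j}$, so the absorption of the consistency terms must be performed pointwise against the normal conductivity $n^\transpose\DD{u_h^{j,\pm}}n$ occurring in $m$, and the trace inequality must be applied with a constant independent of that coefficient; this is exactly why $C_\pen$ is taken as a supremum also over polynomial matrix fields, and why only ``$C_\pen$ large enough'', rather than a fixed minimal value, is claimed.
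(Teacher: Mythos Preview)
Your proposal is correct and follows the same route as the paper: the paper's own proof is the single sentence ``Combine equations (4.3) and (4.6) from \cite{BJM09}'', and your sketch is precisely a reconstruction of those two estimates---(4.6) being the semi-definiteness of $\Bform_{cq}$ yielding the $q_0$ and advective-jump contributions, and (4.3) being the absorption of the consistency terms in $\Bform_d$ via the weighted trace inequality and the choice of $C_\pen$. There is no methodological difference; you have simply written out what the paper delegates to the reference.
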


\begin{proof}
  Combine equations (4.3) and (4.6) from \cite{BJM09}.
\end{proof}

We have by construction that $\Sform$ is coercive with constant one on $\Vtorth{j}$, i.e., for all $x^j_h \in \Vtorth{j}$
\begin{equation} \label{nonlinearScoercivity}
  \Sform(x^j_h, x^j_h) = \norm{x^j_h}^2_\Sform.
\end{equation}

We discretise the time derivative with the backward Euler operator. Summing over each discrete time step gives
\begin{align*}
  \sum_{j=1}^N (\phi \Euler c_h^j,c_h^j) &= \sum_{j=1}^N \frac{1}{\Delta t} (\phi c_h^j,c_h^j) - \frac{1}{\Delta t} (\phi c_h^{j-1},c_h^j)\\
  &\ge \sum_{j=1}^N \frac{1}{\Delta t} \norm{\phi^\fhalf c_h^j}^2_\SL{2}{\Omega} - \frac{1}{2\Delta t}\left( \norm{\phi^\fhalf c_h^{j-1}}^2_\SL{2}{\Omega} + \norm{\phi^\fhalf c_h^{j}}^2_\SL{2}{\Omega}\right)\\
  &=\frac{1}{2\Delta t} \left( \norm{\phi^\fhalf c_h^N}^2_\SL{2}{\Omega} - \norm{\phi^\fhalf c_h^0}^2_\SL{2}{\Omega} \right)
\end{align*}
where we have used Young's Inequality. We have assumed that the initial concentration is 0 and so $\norm{\phi^\fhalf c_h^0}^2_\SL{2}{\Omega}=0$.

\begin{definition}\label{def:dGFEMAlgorithm}
  Define the RT-dG approximation $(u_h,p_h,c_{\sigma}) \in \Pi_{j=1}^N U \times \Pi_{j=1}^N P \times \Pi_{j=1}^N \VtdG{j}$ to \eqref{IMDtransport}-\eqref{IMDic} as that generated by the algorithm: For $1\le j \le N$ and $c_{\sigma}^{j-1} \in \VtdG{j}$ find $(u_h^j,p_h^j,c_{\sigma}^j) \in  U \times P \times \VtdG{j}$ such that
  \begin{align}
	  (\nablah \cdot u^j_h,w^j_h) &= (q^I - q^P, w^j_h), \label{dGFEcontinuity} \\
	  (\ainv{c^j_{\sigma}} u^j_h, v^j_h) - (p^j_h, \nablah \cdot v^j_h) &= (\rho(c^j_{\sigma})g, v^j_h) \label{dGFEdarcy} 
  \end{align}
  for all $(v^j_h,w^j_h) \in U \times P$ and
\begin{equation}\label{dGFEtransport}
\begin{split}
	&\left(\phi \Euler c_{\sigma}^j, d^j_h\right) + \Bform(c^j_{\sigma}, d^j_h; u^j_h)+ \sigma \Sform(c^j_{\sigma}, d^j_h) = (\hat{c} q^{I}, d^j_h) 
\end{split}
\end{equation}
for all $d^j_h \in \VtdG{j}$. 
\end{definition}

\begin{definition} \label{def:cdGFEMAlgorithm}
  Define the RT-cdG approximation $(u_h,p_h,c_\cdGh) \in \Pi_{j=1}^N U \times \Pi_{j=1}^N P \times \Pi_{j=1}^N \VtcdG{j}$ to \eqref{IMDtransport}-\eqref{IMDic} as that generated by the algorithm: For $1\le j \le N$ and $c_\cdGh^{j-1} \in \VtcdG{j}$ find $(u_h^j,p_h^j,c_\cdGh^j) \in  U \times P \times \VtcdG{j}$ such that
  \begin{align}
	  (\nablah \cdot u^j_h,w^j_h) &= (q^I - q^P, w^j_h), \label{cdGFEcontinuity} \\
	  (\ainv{c^j_\cdGh} u^j_h, v^j_h) - (p^j_h, \nablah \cdot v^j_h) &= (\rho(c^j_\cdGh)g, v^j_h) \label{cdGFEdarcy} 
  \end{align}
  for all $(v^j_h,w^j_h) \in U \times P$ and
\begin{equation}
  \begin{split}
	&\left(\phi \Euler c_\cdGh^j, d^j_h\right) + \Bform(c^j_\cdGh, d^j_h; u^j_h) + \sigma \Sform(c^j_\cdGh, d^j_h) = (\hat{c} q^{I}, d^j_h) \label{cdGFEtransport}
  \end{split}
\end{equation}
for all $d^j_h \in \VtcdG{j}$.
\end{definition}

\begin{theorem} \label{thm:convergenceIMD}
  The solution $c_\sigma \in \Pi_{j=1}^N \VtdG{j}$ defined in Definition \ref{def:dGFEMAlgorithm} converges to $c_\cdGh \in \Pi_{j=1}^N \VtcdG{j}$ defined in Definition \ref{def:cdGFEMAlgorithm} as $\sigma \to \infty$, i.e.,
  \begin{equation}
    \lim_{\sigma \to \infty} (c_\sigma - c_h) = 0.
  \end{equation}
\end{theorem}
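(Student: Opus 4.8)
The plan is to reduce Theorem \ref{thm:convergenceIMD} to a time-stepwise application of the abstract argument of Section \ref{sec:AbstractDiscussion}, carried out inductively over the time levels $j = 1, \dots, N$. The essential difficulty compared with Theorem \ref{thm:ARDconvergenceNNCF} is the coupling: the transport equations \eqref{dGFEtransport} and \eqref{cdGFEtransport} depend on $u_h^j$, which in turn is obtained from \eqref{dGFEdarcy}, \eqref{cdGFEdarcy} via the nonlinear coefficients $\ainv{c_\sigma^j}$ and $\rho(c_\sigma^j)$; and the transport update depends on the previous concentration $c_\sigma^{j-1}$. So the statement must be proved by propagating convergence forward in time.

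First I would treat the base situation at a fixed time level $j$, \emph{assuming} $c_\sigma^{j-1} \to c_h^{j-1}$ as $\sigma \to \infty$ (for $j = 1$ this is trivial since both equal $c_h^0 = 0$, independently of $\sigma$). Write $c_\sigma^j = v_\sigma^j + x_\sigma^j$ with $v_\sigma^j \in \VtcdG{j}$ and $x_\sigma^j \in \Vtorth{j}$. Applying the coercivity of $\Bform$ from Lemma \ref{thm:nonlinearBcoercivity}, the coercivity \eqref{nonlinearScoercivity} of $\Sform$, and the nonnegativity of the backward Euler contribution, one tests \eqref{dGFEtransport} with $d_h^j = c_\sigma^j$ and obtains, as in the derivation of \eqref{wsigmabounded}, a bound of the form
\begin{equation*}
  \Lambda_W \triple{c_\sigma^j}^2 + \sigma \norm{x_\sigma^j}_\Sform^2 \le C\bigl(\norm{\phi^\fhalf c_\sigma^{j-1}}_\SLtwoO^2 + \norm{\hat{c}q^I}_\SLtwoO^2\bigr),
\end{equation*}
where the right-hand side is uniformly bounded in $\sigma$ by the inductive hypothesis. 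Hence $\set{c_\sigma^j}_\sigma$ is bounded in $\VtdG{j}$ and $\norm{x_\sigma^j}_\Sform \to 0$, so $x_\sigma^j \to 0$. By Heine--Borel, any sequence $\sigma(i) \to \infty$ has a subsequence along which $c_{\sigma(i)}^j \to \tilde{c}^j$, necessarily with $\tilde{c}^j \in \VtcdG{j}$.

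Next I would identify $\tilde c^j$ with $c_h^j$. The key new step is to pass to the limit in the \emph{velocity}: since $\ainv{\cdot}$ and $\rho(\cdot)$ are continuous (through $\mu$) and the spaces are finite-dimensional, the map $c \mapsto (u^j, p^j)$ defined by \eqref{dGFEcontinuity}--\eqref{dGFEdarcy} is continuous, so $u_{\sigma(i)}^j \to \tilde u^j$ where $\tilde u^j$ is the RT-velocity associated with $\tilde c^j$. Then, using that $\Bform(\,\cdot\,,d_h^j;\,\cdot\,)$ depends continuously on both arguments (again finite dimension, plus continuity of $\DD{\cdot}$ in \eqref{DD-specific}), that $\Euler$ is linear, and that $\Sform(c_{\sigma(i)}^j, d_h^j) = 0$ for $d_h^j \in \VtcdG{j}$ by the stated orthogonality, one passes to the limit in \eqref{dGFEtransport} with test functions restricted to $\VtcdG{j}$ and obtains exactly \eqref{cdGFEtransport}; together with the limiting \eqref{cdGFEcontinuity}--\eqref{cdGFEdarcy} this shows $(\tilde u^j, \tilde p^j, \tilde c^j)$ solves the RT-cdG system at level $j$. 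Assuming this system has a unique solution (which follows from the coercivity estimates and the well-posedness in \cite{BJM09}), $\tilde c^j = c_h^j$. Since every subsequence has a further subsequence converging to the same limit $c_h^j$, the whole family converges: $c_\sigma^j \to c_h^j$. This closes the induction step, and iterating over $j = 1, \dots, N$ yields $c_\sigma \to c_h$ in $\Pi_{j=1}^N \VtdG{j}$.

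The main obstacle is the limit passage in the coupled velocity/pressure block and the nonlinear coefficients: one must argue that $c_{\sigma(i)}^j \to \tilde c^j$ forces $u_{\sigma(i)}^j \to \tilde u^j$ and hence $\DD{u_{\sigma(i)}^j} \to \DD{\tilde u^j}$, so that the nonlinear bilinear form $\Bform(c_{\sigma(i)}^j, d_h^j; u_{\sigma(i)}^j)$ converges to $\Bform(\tilde c^j, d_h^j; \tilde u^j)$. This is where the finite-dimensionality and the regularity assumptions on $\mu$, $\rho$, $\KK$ and the specific form \eqref{DD-specific} are used; away from the degenerate set $\abs{u}=0$ the tensor $\DD{\cdot}$ is continuous, and the estimate above gives enough control to handle the argument cleanly. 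A secondary point to be careful about is that $\TC^j$, $\TD^j$ — and therefore the spaces $\VtcdG{j}$, $\Vtorth{j}$ — are fixed once the time level is fixed (they do not depend on $\sigma$), so the decomposition $W = V \oplus X$ of Section \ref{sec:AbstractDiscussion} applies verbatim at each level.
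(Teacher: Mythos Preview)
Your proposal is correct and follows essentially the same approach as the paper: verify the hypotheses of the abstract argument of Section~\ref{sec:AbstractDiscussion} at each time level using Lemma~\ref{thm:nonlinearBcoercivity} and \eqref{nonlinearScoercivity}, and check the continuity condition \eqref{Bformcontinuity} by observing that the velocity map $c \mapsto u^j(c)$ solving \eqref{dGFEcontinuity}--\eqref{dGFEdarcy} is continuous on the finite-dimensional space $\VtdG{j}$, so that $\Bform(\,\cdot\,,d_h^j;u^j(\,\cdot\,))$ is continuous in its first argument. The paper's own proof is considerably more compressed---it does not spell out the induction over $j$ nor the propagation of the a~priori bound through the $c_\sigma^{j-1}$-dependent right-hand side---so your explicit treatment of these points is in fact more careful, but the underlying mechanism is identical.
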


\begin{proof}
Following the argument of Section \ref{sec:AbstractDiscussion} we use Lemma \ref{thm:nonlinearBcoercivity} and \eqref{nonlinearScoercivity}. In order to complete the proof using this argument we must show that for every sequence $\set{c_i^j}_i$ with elements in $\VtdG{j}$ and $\lim_{i\to\infty} c^j_i = c^j \in \VtdG{j}$ we have
\begin{equation} \label{IMDBformcontinuity}
  \lim_{i\to\infty} \Bform(c_i^j, d^j_h; u^j (c_i^j)) = \Bform(c^j, d^j_h; u^j(c^j)) \qquad \forall d_h^j \in \VtdG{j}
\end{equation}
as in \eqref{Bformcontinuity}, where $u^j(\,\cdotp)$ is the element in $U$ solving \eqref{dGFEcontinuity}-\eqref{dGFEdarcy} for a given element of $\VtdG{j}$.  Note that $u^j : \VtdG{j} \to U$ is a continuous map and so $\lim_{i\to\infty} u^j (c^j_i) =  u^j (\lim_{i\to\infty} c^j_i) = u^j(c^j)$. This also holds for derivatives as they are taken piecewise. Therefore \eqref{IMDBformcontinuity} holds at each timestep and for the whole discrete solution in time.
\end{proof}

%
\section{Numerical Experiments} \label{sec:Numerics}

We present numerical experiments to illustrate Theorems \ref{thm:ARDconvergenceNNCF}, \ref{thm:convergenceIMD} and investigate further the performance of the cdG method.

The results were produced using the \CC library {\dealii}  \cite{BHK07,DealIIRef} using both the super penalty approach (as $\sigma \to \infty$) and a \emph{direct} cdG method, i.e., where the test functions are in \VcdG{} and therefore by construction there will be no jumps across edges in $\EhC$. For further details of the implementation we refer to \cite{CCGJ12Enumath}.

%
\subsection{Equations of Non-negative Characteristic Form} \label{sec:LinearEx0423a}

Let $\Omega = (0,1)^2$. We seek to solve
\begin{equation*}
  -\epsilon \Delta u + (1,1)\cdot \nabla u  = f.
\end{equation*}
Given homogeneous Dirichlet boundary conditions $f$ is chosen such that the solution is given by
\begin{equation*}
  u(x,y) := \left(x - \frac{\ex^{(x-1)/\epsilon} - \ex^{-1/\epsilon}}{1-\ex^{-1/\epsilon}}\right)\left( y- \frac{\ex^{(y-1)/\epsilon} - \ex^{-1/\epsilon}}{1-\ex^{-1/\epsilon}}\right).
\end{equation*}
For $0<\epsilon \ll 1$ this problem exhibits exponential boundary layers along the outflow boundaries $x=1$ and $y=1$ of width $\Order{\epsilon}$. We consider a uniformly refined mesh of squares and set $r=1$ (piecewise bilinear polynomials).

We first look at an example without a layer by setting $\epsilon = 10$. We set $\TC = \Th$, i.e., the cG method. Figure \ref{fig:0423aEx6} shows the behaviour of the difference between the dG and cG approximations in the $\SL{2}{\Th}$ norm, $\SH{1}{\Th}$ semi-norm and the $L^2$ norm of the jumps across edges (represented by $\jump{\,\cdotp}$). As $\sigma$ grows the difference in each norm decreases linearly. The jumps in the either approximation are already very small, i.e., the dG approximation is very close to an element in the cG space. We do not see oscillations polluting the continuous approximation.

\begin{figure}[htb]
  \centering
  \psfrag{sigma}[rB]{$\sigma$}
  \psfrag{penalty}[rB]{$\sigma$}
  \psfrag{diffnorm}[cb][c]{$\norm{w_\sigma - v_h}$}
  \psfrag{truenorm}[cb][c]{$\norm{u-w_\sigma}$}
  \psfrag{L2}[c][r]{\tiny{$L^2$}}
  \psfrag{H1}[c][r]{\tiny$H^1$}
  \psfrag{L2jump}[r][r]{\tiny{$\jump{\,\cdotp}$}\hspace{20pt}}
  \includegraphics[width=0.70\textwidth]{./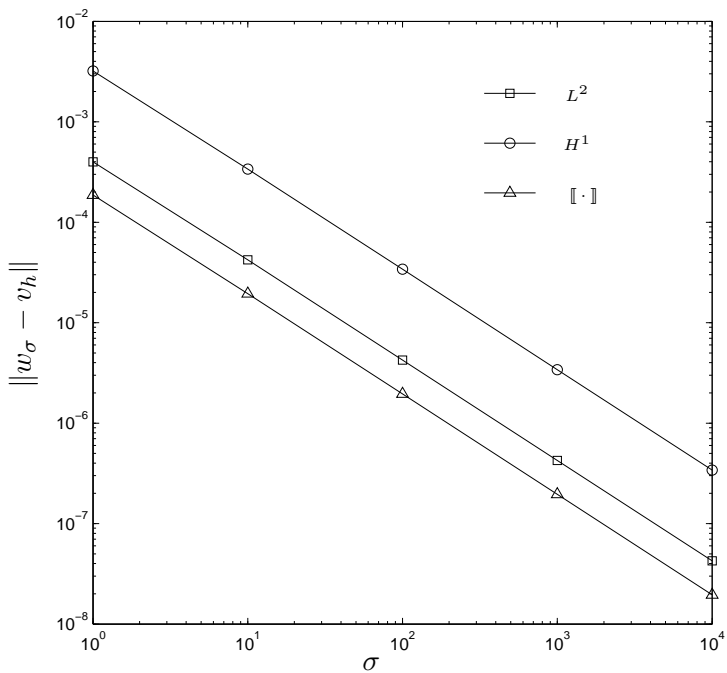}
  \caption{Example \ref{sec:LinearEx0423a} with $\epsilon = 10$ and $\TC = \Th$. As the penalty parameter is increased the difference between the cG and dG approximations decreases linearly in the given norms.}
  \label{fig:0423aEx6}
\end{figure}

We now motivate the cdG method by choosing $\epsilon = 10^{-4}$ and again setting $\TC = \Th$.  The example now has a sharp layer at the outflow boundaries. We see in Figure \ref{fig:0423aEx7}\subref{fig:0423aEx7diff} that increasing $\sigma$ gives a linear response to the error as in Figure \ref{fig:0423aEx6}. When we look at the error in the dG approximation in Figure \ref{fig:0423aEx7}\subref{fig:0423aEx7true} we see that the approximation becomes worse as the penalty is increased. The layer causes non-physical oscillations to pollute the approximation. Although we see convergence of the dG approximation to the cG approximation this property is not desirable.

\begin{figure}[htb]
  \centering
  \psfrag{sigma}[rB]{$\sigma$}
  \psfrag{penalty}[rB]{$\sigma$}
  \psfrag{diffnorm}[cb][c]{$\norm{w_\sigma - v_h}$}
  \psfrag{truenorm}[cb][c]{$\norm{u-w_\sigma}$}
  \psfrag{L2}[c][r]{\tiny{$L^2$}}
  \psfrag{H1}[c][r]{\tiny$H^1$}
  \psfrag{L2jump}[r][r]{\tiny{$\jump{\,\cdotp}$}\hspace{8pt}}
  \subfigure[The difference between the cG and dG approximations.]{\includegraphics[width=0.45\textwidth]{./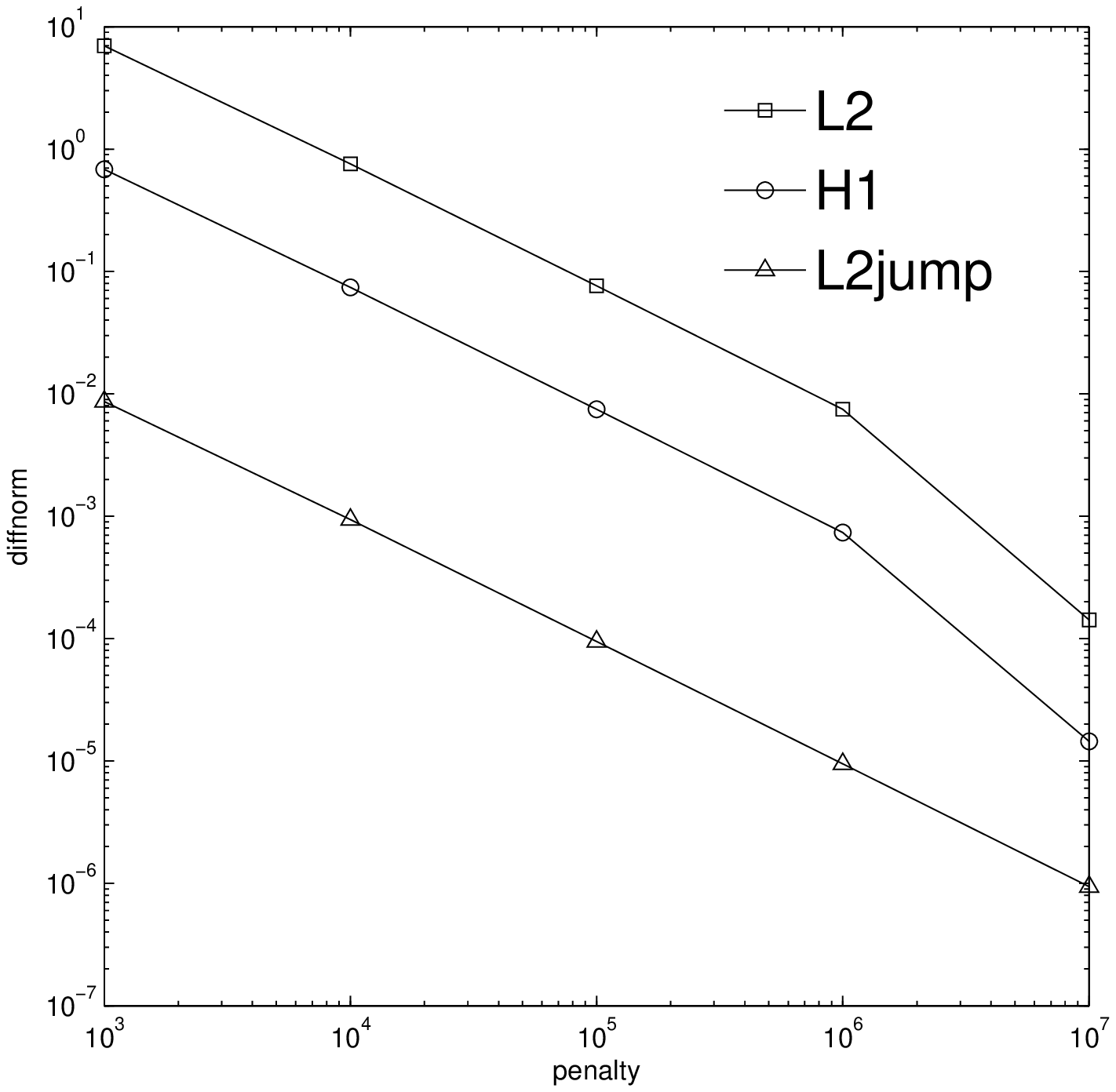} \label{fig:0423aEx7diff} }\hspace{10pt}
  \subfigure[The error in the dG approximation.]{\includegraphics[width=0.45\textwidth]{./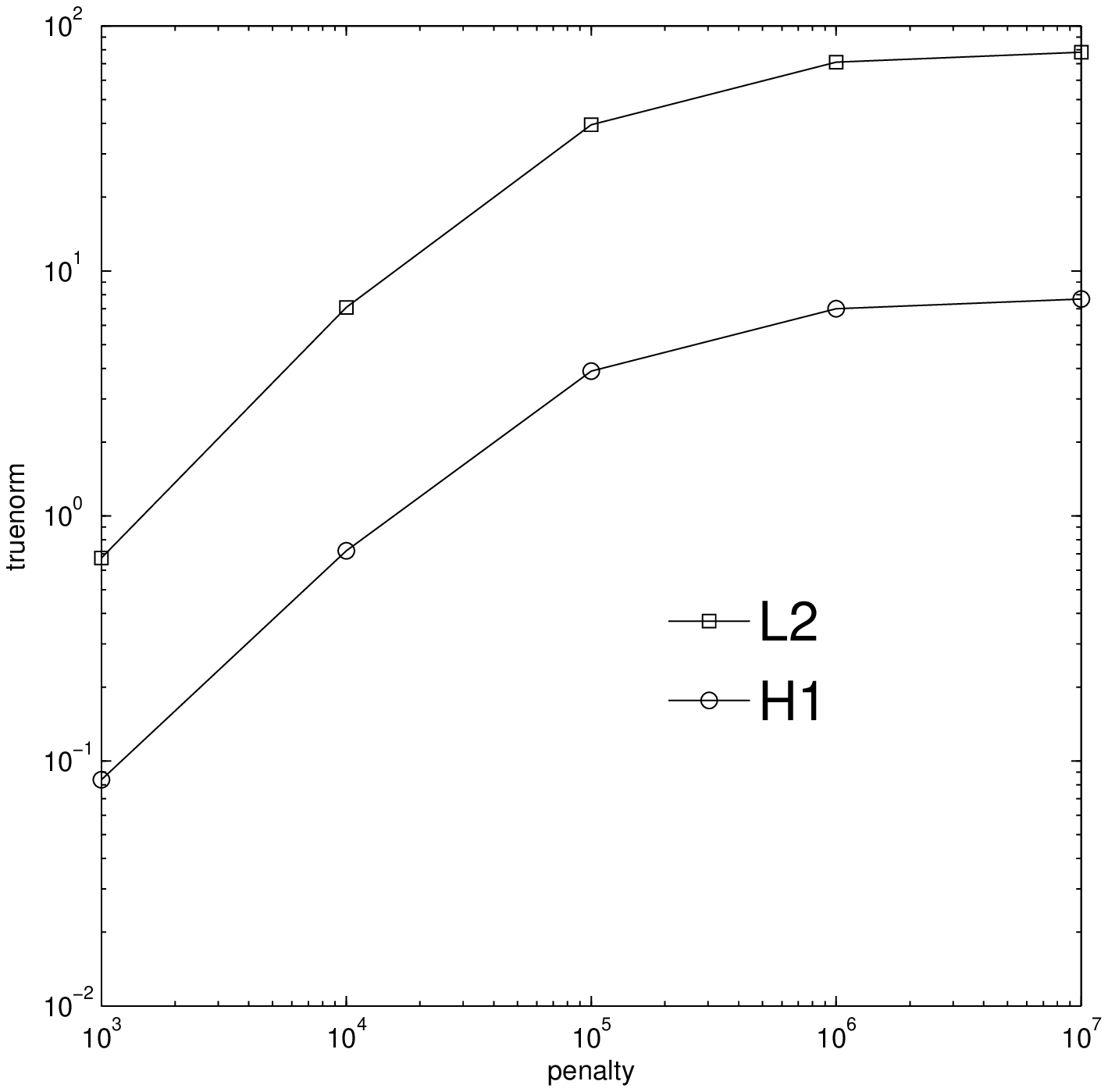} \label{fig:0423aEx7true}}
  \caption{Example \ref{sec:LinearEx0423a} with $\epsilon = 10^{-4}$ and $\TC = \Th$. Now the problem has a layer the error in the dG approximation grows as $\sigma$ is increased. Non-physical oscillations pollute the approximation.}
  \label{fig:0423aEx7}
\end{figure}

We now consider the cdG method with $\epsilon  = 5\times 10^{-3}$ and $5\times10^{-4}$, values chosen so that the layer is partially resolved in the first case and not resolved in the second. The behaviour as $\sigma$ is increased is the same as in the case $\Th = \TC$ and so we do not plot this. We set $h=2^{-5}$ and $\TC = (0,1-ah)^2$. Varying $a \in \Integers$ determines the number of rows in the dG region at the outflow boundary. For $\epsilon = 5\times 10^{-3}$ oscillations are apparent in the cG approximation but the mesh is sufficiently refined so that they are not large. Decreasing $a$ (that is, moving from a fully discontinuous approximation towards a fully continuous approximation) results in a small increase in the error of the approximation which can be seen in Figure \ref{fig:0423acdG}. The continuous, discontinuous and continuous-discontinuous approximations are very close in the $H^1$ semi-norm, including when $\TC$ covers the layer. When $\epsilon = 5\times 10^{-4}$ the layer is sufficiently sharp to induce large oscillations in the fully continuous approximation. By choosing the $\TC$ region to allow discontinuities in at least the final element we may achieve a reduction in degrees of freedom to approximately 30\% of the discontinuous approximation with very little effect on the error ($4096$ degrees of freedom for the dG approximation, $1024$ for the cdG approximation and $1276$ for the cdG approximation with one row of dG elements at the outflow boundary).

\begin{figure}[htb]
  \centering
  \psfrag{aa}[rB]{$a$}
  \psfrag{H1norm}[cb][c]{$\norm{u-v_h}_\SH{1}{\Omega}$}
  \psfrag{eps3}[b][r]{\tiny{$\epsilon = 5\times10^{-3}$}}
  \psfrag{eps4}[b][r]{\tiny{$\epsilon = 5\times10^{-4}$}}
  \includegraphics[width=0.70\textwidth]{./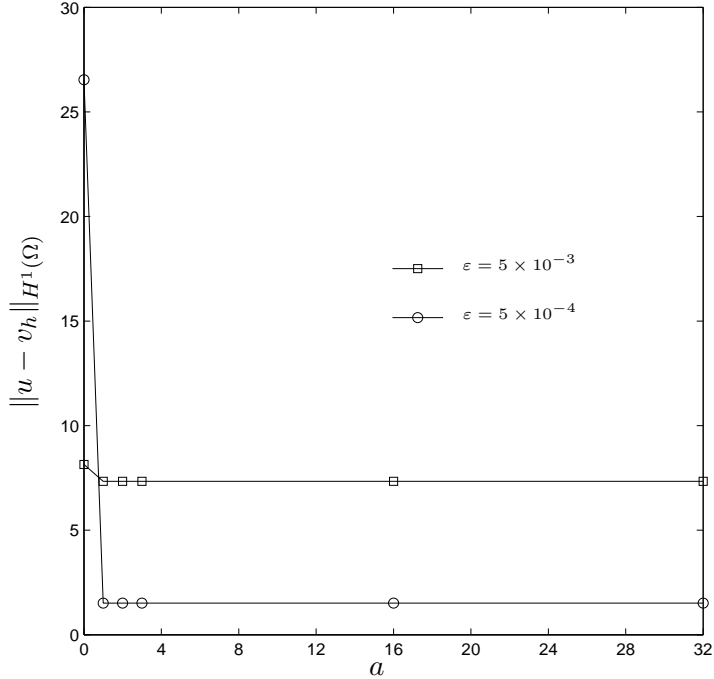}
  \caption{Example \ref{sec:LinearEx0423a} varying $\TC$ for a partially resolved and unresolved layer. In both cases the increase in the error as $\TC$ covers the layer ($a=0$) is apparent. When the layer is sharper the increase is more pronounced as severe oscillations pollute the approximation.}
  \label{fig:0423acdG}
\end{figure}


%
\subsection{Incompressible Miscible Displacement: The ``Quarter of Five Spot" Problem} \label{sec:NonlinearNumerics}

As well as verifying Theorem \ref{thm:convergenceIMD} we wish to show that if the region where continuous elements are used is chosen appropriately there is little difference in the approximations via the RT-cdG or RT-dG method (where the concentration is approximated in the dG space).

We study a standard example \cite{BJM09,CJ86,SRW02} to illustrate the performance of the cdG method for the incompressible miscible displacement problem \eqref{IMDtransport}-\eqref{IMDic}. With $\Omega = (0,1)^2$ the injection (resp.~extraction) well is located at $(1,1)$ (resp.~$(0,0)$). The injection and extraction strength are represented over one element by piecewise constant functions such that $\int_\Omega q^I \dx = \int_\Omega q^P \dx = 0.018$. In \eqref{DD-specific} we set $d_l = 1.8\times 10^{-4}$, $d_m = 1.8\times 10^{-6}$ and $d_t = 1.8\times 10^{-5}$. The porosity is set to $0.1$. The concentration dependent viscosity is given by $\mu(c) = \mu(0)(1+(\mathcal{M}^{1/4} - 1)c)^{-4}$ where $\mathcal{M} = 41.0$ is the mobility ratio (the ratio of the viscosity of the fluids), and $\mu(0) = 1$. For the initial concentration we set $c_0 = 0$ corresponding to $\Omega$ uniformly filled with one fluid. Set $\KK = 0.0288\Id$. We consider a uniform refinement of $\Omega$ into squares of side $h = 2^{-4}$ with timestep $4\times 10^{-3}$ and time interval $(0.0,2.0)$. With these values a sharp front in the concentration component spreads from the injection to extraction point. As can be seen in Figure \ref{fig:0430b}\subref{fig:0430bcG} this causes oscillations in the continuous approximation.

First we present the difference between the dG approximation and the cG approximation (i.e., with $\TC = \Th$) as $\sigma \to \infty$. In Figure \ref{fig:0501a} we show $\norm{c_\sigma - c_h}$ in both the $L^2$ norm against time and the $\SL{2}{(0,T);\SL{2}{\Omega}}$ norm against increasing $\sigma$. In Figure \ref{fig:0501a}\subref{fig:0501atimeerror} we see a sharp increase in the error over the first few iterations. The initial conditions are in the continuous approximation space so the cG and dG approximations are close. As the layer spreads through the domain the difference between the cG and dG approximations for a given $\sigma$ in the $L^2$ norm increases slowly. This is because the number of edges in the vicinity of the layer increases. Figure \ref{fig:0501a}\subref{fig:0501asigmaerror} shows the same behaviour as the stationary examples in Section \ref{sec:LinearEx0423a}.

\def\supertiny{\fontsize{4pt}{10pt}\selectfont}

\begin{figure}[htb]
  \centering
  \psfrag{time}[rB]{$t$}
  \psfrag{sigma}[B]{$\sigma$}
  \psfrag{diffnorm}[cb][c]{\tiny$\norm{c^j_\sigma - c^j_h}_\SL{2}{\Omega}$}
  \psfrag{errornorm}[cb][c]{\tiny$\norm{c_\sigma - c_h}_\SL{2}{(0,T);\SL{2}{\Omega}}$}
  \psfrag{1e3}[c][r]{{\supertiny$10^{3}$}\hspace{10pt}}
  \psfrag{1e5}[c][r]{\supertiny{$10^{5}$}\hspace{10pt}}
  \psfrag{1e7}[c][r]{\supertiny{$10^{7}$}\hspace{10pt}}
  \psfrag{1e9}[c][r]{\supertiny{$10^{9}$}\hspace{10pt}}
  \psfrag{1e11}[c][r]{\supertiny{$10^{11}$}\hspace{10pt}}
  \subfigure[Evolution of the difference between the cG and dG approximations for $\sigma = 10^3$ to $10^{11}$ in $L^2$ norm.]{\includegraphics[width=0.45\textwidth]{./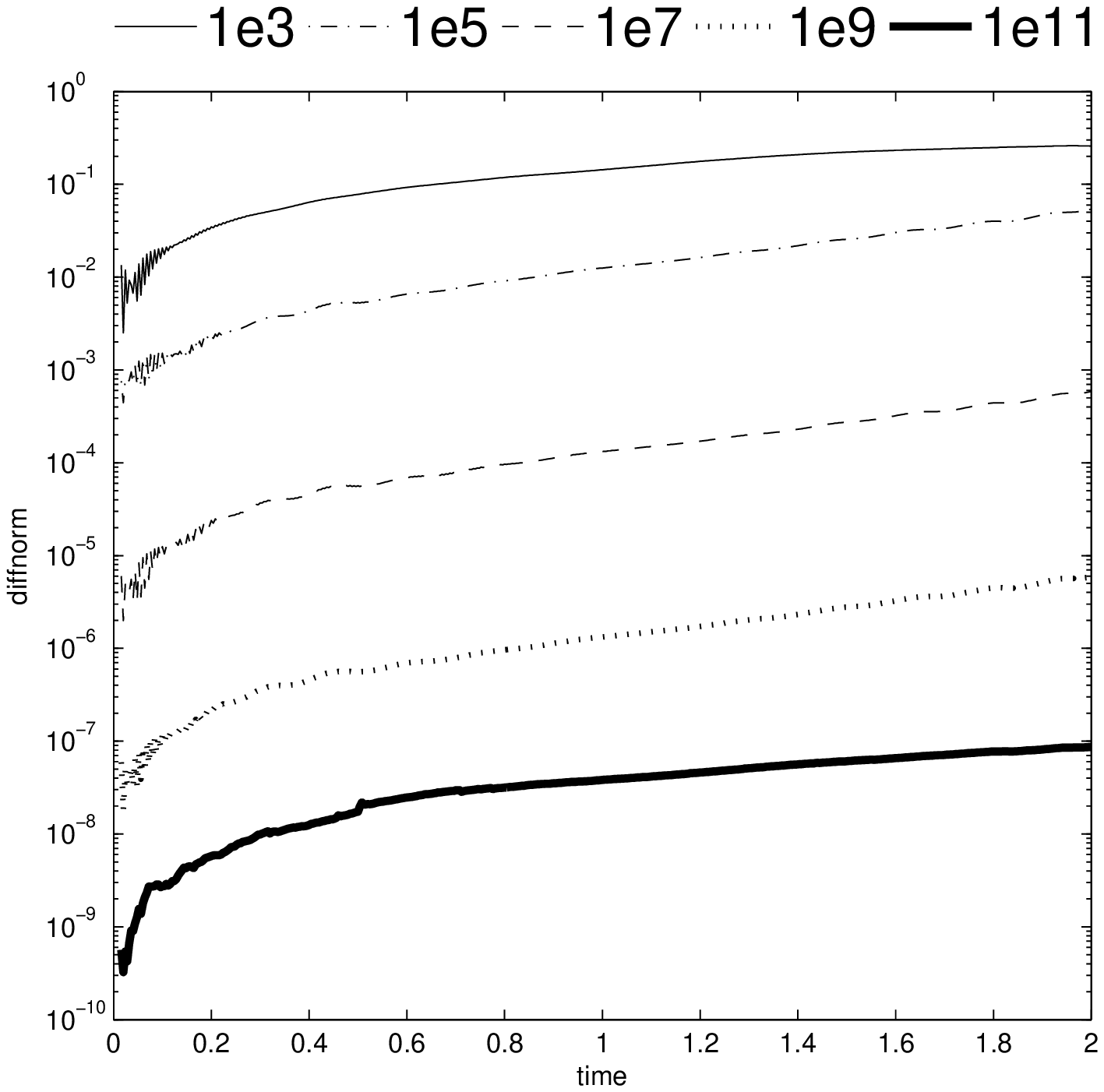} \label{fig:0501atimeerror} }\hspace{10pt}
  \subfigure[Plot of the difference between the cG and dG approximations in the $L^2(L^2)$ norm as $\sigma$ is increased.]{\includegraphics[width=0.45\textwidth]{./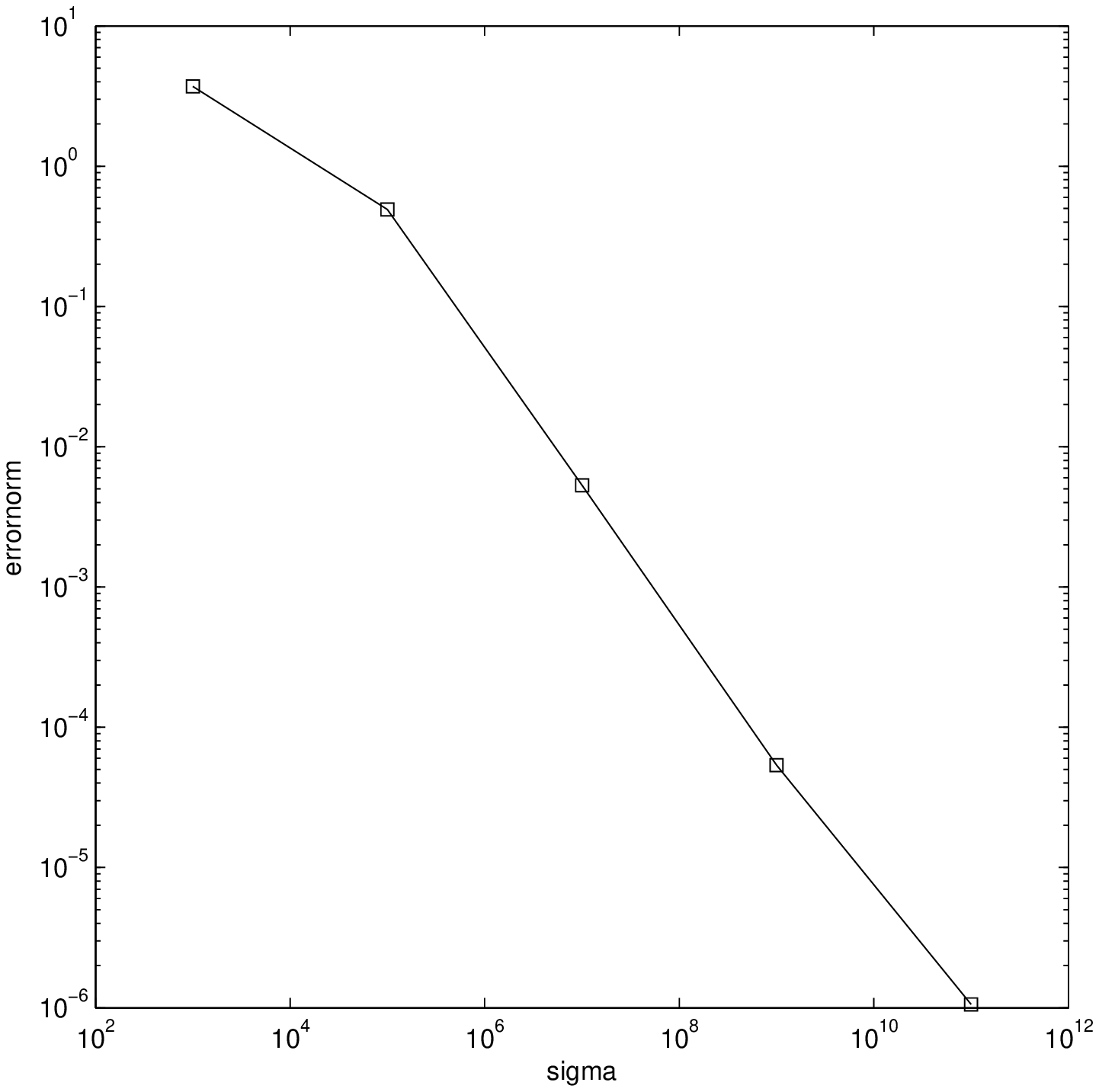} \label{fig:0501asigmaerror}}
  \caption{The effect of increasing $\sigma$ for Example \ref{sec:NonlinearNumerics} with $\TC = \Th$.}
  \label{fig:0501a}
\end{figure}

Picking $\TC$ in Section \ref{sec:LinearEx0423a} was done via knowledge of the true solution and hence knowledge of any layers. We do not have this luxury for the problem considered in this section. We therefore undertake the following procedure for determining $\TC$:
\begin{enumerate}[label=(\arabic*),ref=(\arabic*)]
  \item Determine the initial pressure and velocity given $c_h^0$ and the injection profile.
  \item Solve for the first time step using a RT-dG method to find a discontinuous $c_h^1$.
  \item For all edges determine $\norm{\jump{c_h}}_\SL{2}{e}$.\label{refinestep}
  \item Flag every cell where each edge satisfys $\norm{\jump{c_h}}_\SL{2}{e} < \tol$.
  \item If every edge of a cell is flagged set that cell to be part of $\TC$ in the next iteration. Otherwise the element will be in $\TD$.
  \item For $n$ iterations use the cdG mesh defined in the previous step.
  \item For the $(n+1)^{th}$ iteration reset the mesh to be entirely dG, i.e., $\T^{n+1}_\cG = \emptyset$ for the concentration component, then return to step \ref{refinestep}.
\end{enumerate}

The number of iterations between each cdG refinement and the tolerance should consider the expected motion of the fluid and the time step. We do not consider increasing $\sigma$ for the cdG method, but rather study the performance of the method as the tolerance is increased by comparing the cdG approximation with a dG approximation where $C_\pen = 10$ and $\sigma=0$. With these parameters we set the number of iterations between redefining the cdG space to be 5.

In Figure \ref{fig:0430a} we see that as the tolerance is decreased the difference between the dG and cdG approximations in the $L^2$ norm gets smaller. With a smaller tolerance fewer cells are marked as being continuous. The difference introduced by using some continuous elements does not seem to propagate in time.

\begin{figure}[htb]
  \centering
  \psfrag{time}[rB]{t}
  \psfrag{L2diffnorm}[cb][c]{\tiny$\norm{c^j_\sigma - c^j_h}_\SL{2}{\Omega}$}
  \psfrag{cG}[c][r]{\tiny{cG}}
  \psfrag{cdG-3}[c][r]{\tiny{$\tol = 10^{-3}$}\hspace{10pt}}
  \psfrag{cdG-4}[c][r]{\tiny{$\tol = 10^{-4}$}\hspace{10pt}}
  \psfrag{cdG-5}[c][r]{\tiny{$\tol = 10^{-5}$}\hspace{10pt}}
  \includegraphics[width=0.70\textwidth]{./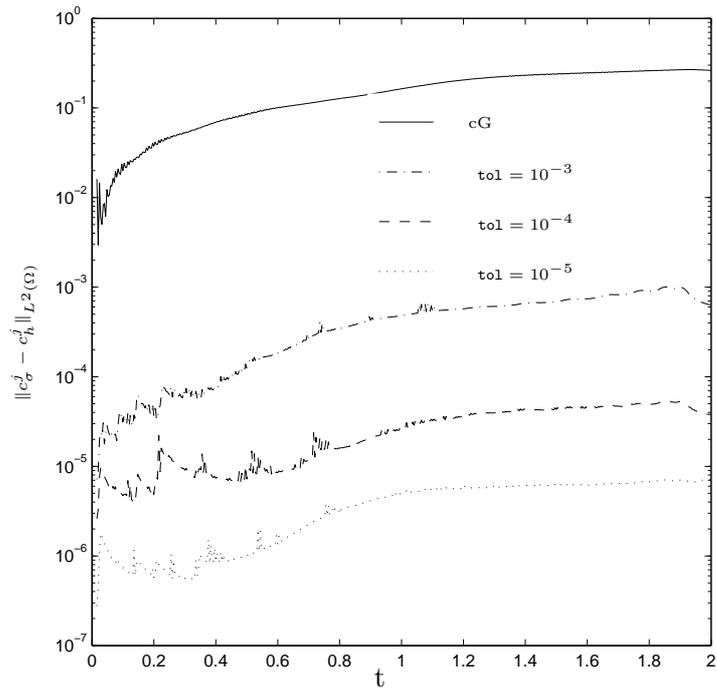}
  \caption{The behaviour of the cdG approximation for Example \ref{sec:NonlinearNumerics}. Using some continuous elements does not dramatically increase the error of the cdG approximation compared to the dG approximation.}
  \label{fig:0430a}
\end{figure}

In Table \ref{table:0430} we see that the number of degrees of freedom saved over the simulation (500 steps with $T= 2.0$, $\Delta t = 4\times 10^{-3}$) is considerable. The effect on the approximation is however small measured in the $L^2(L^2)$ norm. The number of degrees of freedom for the cG method is not 128,000 as would be expected (one degree of freedom per vertex on a $16\times 16$ square mesh for 500 timesteps) due to every fifth iteration being discontinuous.

\begin{table}[htb] 
\centering
\begin{tabular}{|c|c|c|}
\hline
$\tol$ & dofs & $\norm{c_\sigma - c_h}_\SL{2}{(0,T);\SL{2}{\Omega}}$\\
\hline\hline
cG & 219,470 & $3.9970\times 10^0$ \\
\hline 
$10^{-3}$ & 323,488 & $1.2073\times 10^{-2}$ \\
\hline
$10^{-4}$ & 355,328 &  $7.0904\times 10^{-4}$\\
\hline
$10^{-5}$ & 382,384 & $1.0455\times 10^{-4}$ \\
\hline
dG & 512,000 & $0.0000\times 10^0$ \\
\hline
\end{tabular}\vspace{10pt}
\caption{The number of degrees of freedom used for 500 timesteps in Example \ref{sec:NonlinearNumerics}. When $\tol = 10^{-5}$ only 74.6\% of the degrees of freedom are used compared to 43\% for the continuous approximation.}
\label{table:0430}
\end{table}

In Figure \ref{fig:0430b} we show the dG, cG and cdG approximations after 380 timesteps. There is no visible difference between the plots for dG and cdG at each tolerance (Figures \ref{fig:0430b} \subref{fig:0430bdG}, \subref{fig:0430bcdG} and \subref{fig:0430bcdG2}). However for the fully continuous approximation the oscillations induced by the layer are clearly visible and distort the plot.

\begin{figure}[htb]
  \centering
  \subfigure[The fully discontinuous approximation.]{\includegraphics[width=0.45\textwidth]{./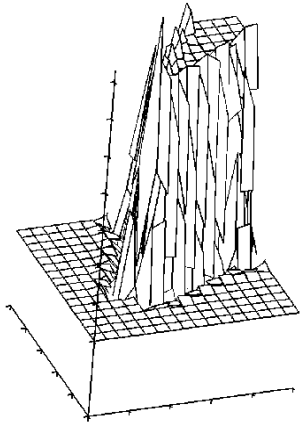} \label{fig:0430bdG} }\hspace{10pt}
  \subfigure[The cdG approximation with $\tol = 10^{-4}$.]{\includegraphics[width=0.45\textwidth]{./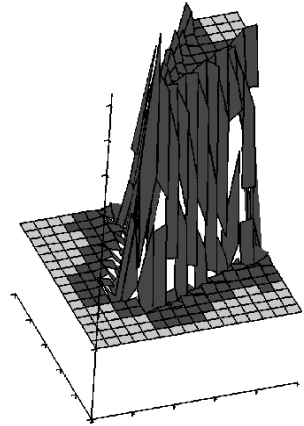} \label{fig:0430bcdG}}\\
  \subfigure[The cdG approximation with $\tol = 10^{-3}$.]{\includegraphics[width=0.45\textwidth]{./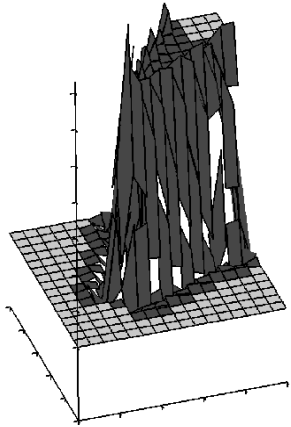} \label{fig:0430bcdG2}}\hspace{10pt}
  \subfigure[The fully continuous approximation.]{\includegraphics[width=0.45\textwidth]{./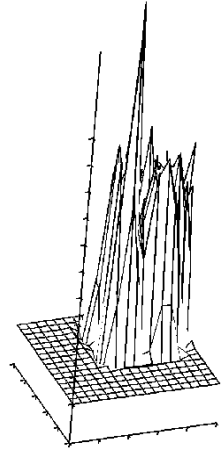} \label{fig:0430bcG}}
  \caption{A plot of the cG method, cdG method and dG method at time 1.52 (380 time steps). The discontinuous region is marked in dark grey for the cdG method. There is no appreciable difference between the first three plots. The oscillations are clearly visible in the fully continuous plot.}
  \label{fig:0430b}
\end{figure}

\clearpage


%

\end{document}